\numberwithin{equation}{section}
\newtheorem{theorem}{Theorem}[section]
\newtheorem{proposition}[theorem]{Proposition}
\newtheorem{lemma}[theorem]{Lemma}
\newtheorem{observation}[theorem]{Observation}
\newtheorem{problem}[theorem]{Problem}
\newtheorem{defn}[theorem]{Definition}
\theoremstyle{definition}
\newcommand{\maj}{{\mathrm {maj}}}
\newcommand{\sign}{{\mathrm {sign}}}
\newcommand{\Val}{{\mathrm {Val}}}
\newcommand{\Rise}{{\mathrm {Rise}}}
\newcommand{\grFrob}{{\mathrm {grFrob}}}
\newcommand{\des}{{\mathrm {des}}}
\newcommand{\Conf}{{\mathrm {Conf}}}
\newcommand{\SYT}{{\mathrm {SYT}}}
\newcommand{\Frob}{{\mathrm {Frob}}}
\newcommand{\symm}{{\mathfrak{S}}}
\newcommand{\PP}{{\mathbb {P}}}
\newcommand{\CC}{{\mathbb {C}}}
\newcommand{\QQ}{{\mathbb {Q}}}
\newcommand{\ZZ}{{\mathbb {Z}}}
\newcommand{\one}{{\bm{1}}}
\newcommand{\xx}{{\mathbf {x}}}
\newcommand{\coFI}{{\mathsf {co}\text{-}\mathsf{FI}}}
\newcommand{\FI}{{\mathsf {FI}}}
\newcommand{\Vect}{{\mathsf {Vect}}}
\DeclareMathOperator{\im}{im}
\begin{document}

\title[Spanning subspace configurations and representation stability]
{Subspace configurations and representation stability}

\author{Brendan Pawlowski}
\address
{Department of Mathematics \newline \indent
University of Southern California \newline \indent
Los Angeles, CA, 90089, USA}
\email{bpawlows@usc.edu}

\author{Eric Ramos}
\address
{Department of Mathematics \newline \indent
University of Oregon \newline \indent
Eugene, OR, 97403, USA}
\email{eramos@uoregon.edu}

\author{Brendon Rhoades}
\address
{Department of Mathematics \newline \indent
University of California, San Diego \newline \indent
La Jolla, CA, 92093, USA}
\email{bprhoades@ucsd.edu}

\begin{abstract}
Let $V_1, V_2, V_3, \dots $ be a sequence of $\QQ$-vector spaces where $V_n$ carries
an action of $\symm_n$ for each $n$.  {\em Representation stability} and {\em multiplicity stability}
are two related notions of when the sequence $V_n$ has a limit.
An important source of stability phenomena arises in the case where $V_n$ is the $d^{th}$ homology group
(for fixed $d$) of the configuration space of $n$ distinct points in some fixed topological space $X$.
We replace these configuration spaces with the variety $X_{n,k}$ of 
{\em spanning configurations} of $n$-tuples $(\ell_1, \dots, \ell_n)$ of lines in
$\CC^k$ which satisfy $\ell_1 + \cdots + \ell_n = \CC^k$ as vector spaces.
We study stability phenomena for the homology groups $H_d(X_{n,k})$ as the parameter $(n,k)$ grows.
\end{abstract}

\keywords{symmetric group, representation stability, homology, spanning configuration}
\maketitle

\section{Introduction and Main Results}
\label{Introduction}

Suppose that for each $n \geq 1$, we have a representation 
$V_n$ of the symmetric group $\symm_n$. 
\footnote{All representations considered in this paper will be finite-dimensional and over $\QQ$.}
What does it mean for the sequence $V_1, V_2, V_3, \dots $  to converge?
Two, a priori different, answers to this question  have arisen in the literature:
multiplicity stability and representation stability.
To describe these notions, we need some notation.

A {\em partition} of $n$ is a weakly decreasing sequence $\lambda = (\lambda_1, \lambda_2, \dots )$ of positive
integers with $\lambda_1 + \lambda_2 + \cdots = n$. We write $\lambda \vdash n$ to mean that $\lambda$
is a partition of $n$ and $|\lambda| = n$ for the sum of the parts of $\lambda$.
The {\em (English) Ferrers diagram} of $\lambda$ consists of $\lambda_i$ left-justified boxes in row $i$;
we identify partitions with their Ferrers diagrams.
The partition $(3,3,1) \vdash 7$ has the following Ferrers diagram:
\begin{small}
\begin{center}
\begin{young}
 & & \cr
 & & \cr 
 \cr
\end{young}
\end{center}
\end{small}
There is a  correspondence between partitions of $n$ and irreducible representations
of $\symm_n$; given $\lambda \vdash n$, let $S^{\lambda}$ be the corresponding irreducible 
$\symm_n$-module.

If $\mu = (\mu_1, \mu_2, \dots )$ is a partition and $n \geq |\mu| + \mu_1$, the {\em padded partition}
is $\mu[n] \vdash n$ is given by $\mu[n] := (n - |\mu|, \mu_1, \mu_2, \dots )$.
Any partition $\lambda \vdash n$ may be expressed uniquely as $\lambda = \mu[n]$ for some partition $\mu$.
In fact, if $\lambda = (\lambda_1, \lambda_2, \lambda_3, \dots )$ then $\lambda = \mu[n]$
where $\mu = (\lambda_2, \lambda_3, \dots )$.

For any $n \geq 1$, the $\symm_n$-module $V_n$ decomposes into a direct sum of irreducibles.
There exist unique multiplicities $m_{\mu,n}$ so that 
\begin{equation}
V_n \cong \bigoplus_{\mu} m_{\mu,n}   S^{\mu[n]}
\end{equation}
where the direct sum is over all partitions $\mu$.

\begin{defn}
\label{multiplicity-stability-definition}
The sequence $(V_n)_{n \geq 1}$  is {\em uniformly multiplicity stable} if there exists 
$N$ such that for any partition $\mu$, we have $m_{\mu,n} = m_{\mu,n'}$ for all $n, n' \geq N$.
\end{defn}

Note that we set $m_{\mu,n} = 0$ whenever $n < |\mu| + \mu_1$. This then implies, for any multiplicity stable sequence of representations $(V_n)_{n \geq 1}$, that $m_{\mu,n} = 0$ for all but finitely many partitions $\mu$.

While multiplicity stability is a property of a sequence of $\symm_{n}$-representations, representation stability is concerned with such sequences which fit together in a consistent way. We embed $\symm_n \subseteq \symm_{n+1}$ by letting $\symm_n$ act on the first $n$ letters.
With respect to this embedding, any $\symm_{n+1}$-module is also an $\symm_n$-module.

\begin{defn}
\label{representation-stability-definition}
Let $(V_n)_{n \geq 1}$ be a sequence of $\symm_n$ representations, and for each $n \geq 1$ let $f_n:V_n \rightarrow V_{n+1}$ be a linear map. Then we say that $(V_n)_{n \geq 1}$ is {\em (uniformly) representation stable} with respect to the maps $(f_n)_{n \geq 1}$ if for $n \gg 0$
\begin{itemize}
\item the map $f_n$ is injective,
\item we have $f_n(w \cdot v) = w \cdot f_n(v)$ for all $w \in \symm_n$ and all $v \in V_n$,
\item the $\symm_{n+1}$ module generated by the image $f_n(V_n) \subseteq V_{n+1}$ is all of $V_{n+1}$, and
\item the transposition $(n+1,n+2) \in \symm_{n+2}$ acts trivially on the image of the composition $\im(V_n \stackrel{f_n}\rightarrow V_{n+1} \stackrel{f_{n+1}}\rightarrow V_{n+2})\subseteq V_{n+2}$.\label{consistent}
\end{itemize}
\end{defn}

An algebraic instance of representation stability arises from considering homogeneous polynomials.
Fix a polynomial degree $d$ and let $V_n := \QQ[x_1, \dots, x_n]_d$ be the space of polynomials in
$x_1, \dots, x_n$ which are homogeneous of degree $d$, with $\symm_n$ acting by subscript permutation.
If we let $f_n: V_n \hookrightarrow V_{n+1}$ be the inclusion, the sequence $(V_n)_{n \geq 1}$ is
representation stable.

Many geometric instances of representation stability arise from the homology groups of 
configuration spaces.
If $X$ is a topological space and $n \geq 1$, the $n^{th}$ {\em configuration space} of $X$ is 
the moduli space of $n$ distinct points in $X$:
\begin{equation}
\Conf_n X := \{ (x_1, \dots, x_n) \,:\, x_i \in X \text{ and $x_i \neq x_j$ for all $i \neq j$} \}
\end{equation}
The left of Figure~\ref{three-points} shows a point in $\Conf_3(X)$ where $X$ is the torus.
The set $\Conf_n X$ has the subspace topology inherited from the $n$-fold product 
$X \times \cdots \times X$.

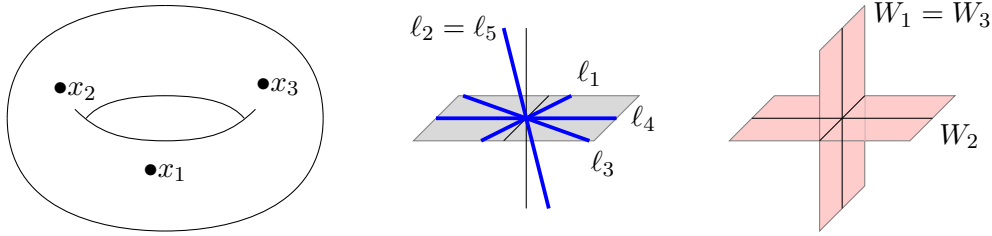
\begin{figure}

\begin{tikzpicture}[scale = 0.6]
\draw (-3.5,0) .. controls (-3.5,2) and (-1.5,2.5) .. (0,2.5);
\draw[xscale=-1] (-3.5,0) .. controls (-3.5,2) and (-1.5,2.5) .. (0,2.5);
\draw[rotate=180] (-3.5,0) .. controls (-3.5,2) and (-1.5,2.5) .. (0,2.5);
\draw[yscale=-1] (-3.5,0) .. controls (-3.5,2) and (-1.5,2.5) .. (0,2.5);

\draw (-2,.2) .. controls (-1.5,-0.3) and (-1,-0.5) .. (0,-.5) .. controls (1,-0.5) and (1.5,-0.3) .. (2,0.2);

\draw (-1.75,0) .. controls (-1.5,0.3) and (-1,0.5) .. (0,.5) .. controls (1,0.5) and (1.5,0.3) .. (1.75,0);

\draw [gray, fill=gray!30] (6.5,0.5) -- (10.5,0.5) -- (9.5,-0.5) -- (5.5,-0.5) -- (6.5,0.5);

\node at (0,-1.2) {$\bullet x_1$};

\node at (-2,0.6) {$\bullet x_2$};

\node at (2.5,0.7) {$\bullet x_3$};


\draw  (8,-2) -- (8,2);

\draw (6,0) -- (10,0);

\draw (8.5,0.5) -- (7.5,-0.5);

\draw [line width = 0.5mm, blue] (9,0.5) -- (7,-0.5);

\draw [line width = 0.5mm, blue] (6.6,0.5) -- (9.4,-0.5);

\draw [line width = 0.5mm, blue] (8.5,-2) -- (7.5,2);

\draw [line width = 0.5mm, blue] (6,0) -- (10,0);

\node at (6.4,2) {$\ell_2 = \ell_5$};

\node at (9.4,1) {$\ell_1$};

\node at (10.6,0) {$\ell_4$};

\node at (9.7,-1) {$\ell_3$};


\draw [gray, fill=red!20] (13.5,0.5) -- (17.5,0.5) -- (16.5,-0.5) -- (12.5,-0.5) -- (13.5,0.5);

\draw [gray, fill=red!20] (15.5,2.5) -- (14.5,1.5) -- (14.5,-2.5) -- (15.5,-1.5) -- (15.5,2.5);


\draw [gray] (15.5, -0.5) -- (14.5,-0.5);

\draw [red!20] (15.5, -0.5) -- (15.5,0);

\draw [red!20] (15.5, 0) -- (15.5,0.5);

\draw  (15,-2) -- (15,2);

\draw (13,0) -- (17,0);

\draw (15.5,0.5) -- (14.5,-0.5);


\node at (17.6,-0.4) {$W_2$};

\node at (17, 2.3) {$W_1 = W_3$};

\end{tikzpicture}

\caption{A point configuration, a line configuration, and a 2-plane configuration.}
\label{three-points}
\end{figure}

For $d \geq 0$, let $H_d(\Conf_n X )$ be the $d^{th}$ homology group of the $n^{th}$ configuration space 
of $X$. 
(Unless otherwise stated, we  use singular (co)homology with rational coefficients.)
The action of $\symm_n$ on $\Conf_n X$ by subscript permutation induces an $\symm_n$-action on the vector space
$H_d(\Conf_n X)$.
There are many results stating that if the space $X$ is `nice', the sequence
$(H_d(\Conf_n X))_{n \geq 1}$ is uniformly multiplicity stable \cite{Church, CEF}.

It is natural to ask whether there is any formal relationship between uniformly multiplicity stable sequences of representations, and representation stability with respect to a given collection of linear maps. Indeed, the following is one of the primary theorems of \cite{CEF}.

\begin{theorem}[\cite{CEF}]
Let $(V_n)_{n \geq 1}$ be a sequence of $\symm_n$-representations. Then $(V_n)_{n \geq 1}$ is uniformly multiplicity stable if and only if there exists some collection of linear maps $f_n:V_n \rightarrow V_{n+1}$ such that $(V_n)_{n \geq 1}$ is representation stable with respect to $(f_n)_{n \geq 1}$.
\end{theorem}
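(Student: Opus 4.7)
The plan is to prove the two implications separately, both using the interaction of padded partitions with the branching rules for symmetric groups. Specifically, for $n$ large enough relative to $|\mu|$, Pieri's rule and its dual give
$$\Ind_{\symm_n}^{\symm_{n+1}} S^{\mu[n]} \cong S^{\mu[n+1]} \oplus \bigoplus_{\mu^+} S^{\mu^+[n+1]},$$
$$\Res_{\symm_n}^{\symm_{n+1}} S^{\mu[n+1]} \cong S^{\mu[n]} \oplus \bigoplus_{\mu^-} S^{\mu^-[n]},$$
where $\mu^\pm$ ranges over partitions obtained from $\mu$ by adding or removing a single box.

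For the direction multiplicity stable $\Rightarrow$ representation stable, I would proceed as follows. Since $S^{\mu[n]}$ occurs with multiplicity one in the restriction above, Schur's lemma provides a canonical (up to scalar) $\symm_n$-equivariant embedding $\iota_\mu^n \colon S^{\mu[n]} \hookrightarrow S^{\mu[n+1]}$. Next, I would fix decompositions $V_n \cong \bigoplus_\mu m_\mu \cdot S^{\mu[n]}$ for all $n \geq N$ (permissible by the stable multiplicity assumption) and define $f_n$ as the direct sum of the $\iota_\mu^n$ applied between paired summands. Injectivity and $\symm_n$-equivariance are built into the construction. Generation then reduces to verifying that the image of $\iota_\mu^n$ generates $S^{\mu[n+1]}$ as an $\symm_{n+1}$-module, which follows from Frobenius reciprocity: the image lies in the (multiplicity-one) $S^{\mu[n+1]}$-isotypic component of $\Ind S^{\mu[n]}$. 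The transposition axiom would follow from a second branching computation: the image of $\iota_\mu^{n+1} \circ \iota_\mu^n$ inside $S^{\mu[n+2]}$ lands in the $S^{\mu[n]} \boxtimes \triv$ summand under $\Res_{\symm_n \times \symm_{\{n+1,n+2\}}}^{\symm_{n+2}}$, on which $(n+1,n+2)$ acts trivially by definition.

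For the converse, my plan is to decompose $V_n \cong \bigoplus_\mu m_{\mu, n} S^{\mu[n]}$ and extract numerical constraints on the $m_{\mu, n}$ from each axiom. Injectivity and $\symm_n$-equivariance of $f_n$ give an embedding $V_n \hookrightarrow \Res V_{n+1}$, and comparing multiplicities via the restriction rule yields
$$m_{\mu, n} \leq m_{\mu, n+1} + \sum_{\mu^+} m_{\mu^+, n+1}. \qquad (\star)$$
The generation condition produces a surjection $\Ind V_n \twoheadrightarrow V_{n+1}$, whence Pieri gives
$$m_{\mu, n+1} \leq m_{\mu, n} + \sum_{\mu^-} m_{\mu^-, n}. \qquad (\star\star)$$
Then the transposition axiom, combined with two iterations of generation, should upgrade this to a surjection $\Ind_{\symm_n \times \symm_2}^{\symm_{n+2}}(V_n \boxtimes \triv) \twoheadrightarrow V_{n+2}$. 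By iterated Pieri, $\Ind(S^{\mu[n]} \boxtimes \triv)$ contributes only padded irreducibles $S^{\lambda[n+2]}$ with $|\lambda| \in \{|\mu|, |\mu|+1, |\mu|+2\}$, and the case $|\lambda| = |\mu|$ occurs uniquely at $\lambda = \mu$ with multiplicity one. Combining this shape constraint with $(\star)$ and $(\star\star)$ in an induction on $|\mu|$ should force the inequalities to be equalities for all $n \geq N$, with a single $N$ independent of $\mu$.

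The main obstacle I anticipate is the conclusion of the forward direction: the two branching inequalities alone permit multiplicities to drift up or down, so the transposition axiom is essential to pin the shape of the decomposition. The most delicate part will be ensuring that the stabilization threshold $N$ works uniformly across all partitions $\mu$ rather than growing with $|\mu|$; the shape bound coming from the iterated Pieri calculation is what keeps the threshold uniform.
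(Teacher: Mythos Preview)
The paper does not prove this statement; it is quoted from \cite{CEF} as background and used without proof, so there is no in-paper argument to compare your proposal against.

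On its own, your outline pursues a direct branching-rule strategy rather than the $\FI$-module structure theory that \cite{CEF} actually uses (and that this paper deploys later, in Section~\ref{Proofs}). The implication ``multiplicity stable $\Rightarrow$ representation stable'' is essentially complete as you sketch it. For the converse, you have assembled the right ingredients---the inequalities $(\star)$, $(\star\star)$, and the surjection $\Ind_{\symm_n \times \symm_2}^{\symm_{n+2}}(V_n \boxtimes \triv) \twoheadrightarrow V_{n+2}$ coming from the transposition axiom---but the induction on $|\mu|$ as stated does not close: $(\star\star)$ bounds $m_{\mu,n+1}$ in terms of smaller $\mu$, while $(\star)$ brings in larger $\mu$, so neither direction of induction yields eventual monotonicity of $m_{\mu,n}$, and boundedness of an integer sequence does not by itself force stabilization. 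The missing step is to iterate the transposition-plus-generation argument down to a fixed level $N$, producing surjections $\Ind_{\symm_N \times \symm_{n-N}}^{\symm_n}(V_N \boxtimes \triv) \twoheadrightarrow V_n$ for all $n \geq N$; this is exactly the assertion that the $V_n$ assemble into a finitely generated $\FI$-module, at which point the structure theorem of \cite{CEF} (or a careful direct analysis of that induced module) delivers uniform multiplicity stability.
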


In practice, most natural examples of sequences $(V_n)_{n \geq 1}$ often come equipped with equally natural maps $(f_n:V_n \rightarrow V_{n+1})_{n \geq 1}$, as we saw in the above examples. The above theorem is therefore most useful in proving uniform multiplicity stability of the sequence, as one only needs to check the four relatively simple conditions of Definition \ref{representation-stability-definition}. This particular brand of argument also illustrates the following common theme, present throughout the theory:

\begin{quote}
{\bf Leitmotif.}
{\em Given a sequence $V_n$ of $\symm_n$-modules, it is often easier to prove that the sequence $V_n$
is uniformly multiplicity stable than it is to calculate the decomposition of each $V_n$ into irreducibles.}
\end{quote}

In this paper we consider stability in a family of spaces where the configuration
space condition of {\bf distinctness} is replaced by the matroidal condition of {\bf spanning}.

\begin{defn}
\label{x-space-definition}
Let $k \leq n$ be positive integers. The space $X_{n,k}$ consists of $n$-tuples of lines in the complex
vector space $\CC^k$ which span $\CC^k$:
\begin{equation*}
X_{n,k} := \{ (\ell_1, \dots, \ell_n) \,:\, \ell_i \subseteq \CC^k \text{ is a 
$1$-dimensional subspace and } \ell_1 + \cdots + \ell_n = \CC^k \}.
\end{equation*}
We also set $X_{n,k} := \varnothing$ if $k < n$ or $k < 0$.
\end{defn}

A point in $X_{5,3}$ is shown in the center of Figure~\ref{three-points}.
The space $X_{n,k}$ is homotopy equivalent to the variety $\mathcal{F \ell}_n$ 
of complete flags in $\CC^n$ when $k = n$.
For general $k \leq n$, the space $X_{n,k}$ serves as a kind of `flag variety' for the 
{\em Delta Conjecture} of symmetric function theory; see \cite{HRW, HRS, PR}.
The symmetric group $\symm_n$ acts on $X_{n,k}$ by line permutation:
$w.(\ell_1, \dots, \ell_n) := (\ell_{w(1)}, \dots, \ell_{w(n)})$.
This induces an action on the homology  $H_{\bullet}(X_{n,k})$.

The are two natural ways to grow the pair $(n,k)$ subject to the condition $k \leq n$:
\begin{center}
$(n,k) \leadsto (n+1,k)$ and $(n,k) \leadsto (n+1, k+1)$.
\end{center}
Both of these growth rules yield multiplicity stable homology representations.

\begin{theorem}
\label{multiplicity-theorem}
Fix a homological degree $d \geq 0$. 
\begin{enumerate}
\item For fixed $k \geq 0$, the sequence $(H_d(X_{n,k}))_{n \geq 1}$ is uniformly multiplicity stable.
\item For fixed $m \geq 0$, the sequence $(H_d(X_{n, n-m}))_{n \geq 1}$ is uniformly multiplicity stable.
\end{enumerate}
\end{theorem}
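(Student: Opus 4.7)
The plan is to invoke the CEF theorem stated above: it suffices in each part to produce linear maps $f_n$ making the homology sequence representation stable in the sense of Definition~\ref{representation-stability-definition}, as this is equivalent to uniform multiplicity stability. In both parts the maps will be induced by explicit continuous embeddings of spanning configuration spaces, so $\symm_n$-equivariance and the trivial transposition condition will come from geometric considerations; injectivity and $\symm_{n+1}$-generation are the substantive conditions.

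For part (1), fix a nonzero vector $v\in \CC^k$ and define $\iota_n\colon X_{n,k}\to X_{n+1,k}$ by
\[
(\ell_1,\dots,\ell_n)\longmapsto (\ell_1,\dots,\ell_n,\langle v\rangle),
\]
which is well-defined since adjoining a line to a spanning tuple preserves spanning. With $f_n:=(\iota_n)_\ast$, the $\symm_n$-equivariance is immediate, and the composition $\iota_{n+1}\circ\iota_n$ has image in configurations whose final two lines both equal $\langle v\rangle$, so the transposition $(n+1,n+2)$ fixes the image pointwise, let alone on homology.

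For part (2), fix $m\geq 0$, identify $\CC^{n-m}\subseteq\CC^{n+1-m}$ via the first $n-m$ coordinates, and define $\psi_n\colon X_{n,n-m}\to X_{n+1,n+1-m}$ by
\[
(\ell_1,\dots,\ell_n)\longmapsto (\ell_1,\dots,\ell_n,\langle e_{n-m+1}\rangle).
\]
This is $\symm_n$-equivariant. The transposition $(n+1,n+2)$ swaps the distinct coordinate axes $\langle e_{n-m+1}\rangle$ and $\langle e_{n-m+2}\rangle$ in the image of $\psi_{n+1}\circ \psi_n$, so does not fix it on the nose; however, the one-parameter family obtained by rotating the ordered pair of lines within the $2$-plane $\langle e_{n-m+1},e_{n-m+2}\rangle$ by angle $\pi t/2$ gives a homotopy---in families over $X_{n,n-m}$---between $\psi_{n+1}\circ \psi_n$ and $(n+1,n+2)\cdot(\psi_{n+1}\circ \psi_n)$. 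Hence $(n+1,n+2)$ acts trivially on the image in homology.

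The main obstacle is injectivity of $(f_n)_\ast$ and $\symm_{n+1}$-generation of its image. My strategy is to upgrade the $\iota_n$ and $\psi_n$ into an $\FI$-module structure: to each injection $\phi\colon [n]\hookrightarrow[n']$ of finite sets associate the map $X_{n,?}\to X_{n',?}$ that places $\langle v\rangle$ (part~1) or a fresh coordinate axis (part~2, with appropriate ordering conventions making functoriality hold at least up to the homotopies above) at each position outside $\phi([n])$; passing to $H_d$ yields $\FI$-modules $n\mapsto H_d(X_{n,k})$ and $n\mapsto H_d(X_{n,n-m})$. By the theory of Church--Ellenberg--Farb, representation stability then follows once these $\FI$-modules are finitely generated. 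To establish finite generation in fixed homological degree $d$, I would appeal to the explicit description of $H^\ast(X_{n,k})$ as the Haglund--Rhoades--Shimozono quotient $R_{n,k}$ provided by Pawlowski--Rhoades; this description shows that the irreducibles $S^{\mu[n]}$ occurring in $H_d(X_{n,k})$ have $|\mu|$ bounded uniformly in $n$, which caps the number of $\FI$-module generators required in that degree and closes the argument.
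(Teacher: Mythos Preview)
Your overall strategy---construct an $\FI$-module structure on homology via geometric embeddings, then invoke the CEF equivalence to deduce representation stability and hence multiplicity stability---is sound, and is essentially the route the paper takes in its proofs of Theorem~\ref{stability-theorem} (which then implies Theorem~\ref{multiplicity-theorem}). It differs from the paper's direct proof of Theorem~\ref{multiplicity-theorem} in Section~\ref{Proofs}, which instead uses the Large Black Box Theorem~\ref{graded-frobenius} to compute the multiplicities $m_{\mu,n}$ explicitly as counts of pairs $(T,\nu)$ of tableaux and partitions, and then shows these counts stabilize combinatorially.

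However, your final step has a genuine gap. You assert that, because the presentation $H^\bullet(X_{n,k})=R_{n,k}$ shows the irreducibles $S^{\mu[n]}$ appearing in $H_d(X_{n,k})$ have $|\mu|$ uniformly bounded, this ``caps the number of $\FI$-module generators required.'' That inference is false. An $\FI$-module $V$ in which every $V(n)$ is built from irreducibles of bounded weight can still fail to be finitely generated: take $V(n)=S^{(n)}$ for all $n$ with every transition map zero. Finite generation is a condition on the maps, not on the isotypic content of the individual pieces; your bound on $|\mu|$ yields neither the injectivity of $f_n$ nor the $\symm_{n+1}$-generation of its image, which are precisely the substantive conditions you flagged as remaining.

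The paper closes this gap not by bounding weights but by invoking the Noetherian property (Theorem~\ref{fi-is-noetherian}) after exhibiting the module as a sub- or quotient object of something manifestly finitely generated. In the geometric argument, the affine paving of Theorem~\ref{affine-paving} furnishes an $\symm_n$-equivariant injection $H_d(X_{n,k})\hookrightarrow H_d((\PP^{k-1})^n)$ compatible with the $\FI$-structure you built; since $[n]\mapsto H_d((\PP^{k-1})^n)$ is obviously finitely generated, Noetherianity finishes. In the algebraic argument, one checks that $[n]\mapsto (I_{n,k})_s$ (resp.\ $[n]\mapsto (I_{n,n-m})_s$) is a $\coFI$-submodule of $[n]\mapsto \QQ[x_1,\dots,x_n]_s$, so the quotient $(R_{n,k})_s$ is a finitely generated $\coFI$-module and one dualizes. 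Either of these supplies exactly the missing ingredient that a weight bound alone cannot.
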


When $n-m < 0$, by convention $X_{n,n-m} = \varnothing$ so that $H_d(X_{n,n-m}) = H_d(\varnothing) = 0$.
Fixing $d$ in Theorem~\ref{multiplicity-theorem} is necessary. In Part (2) when $m = 0$ we have
$H_{\bullet}(X_{n,n}) \cong H_{\bullet}(\mathcal{F \ell}_n) \cong \QQ[\symm_n]$, the regular representation
of $\symm_n$.
The sequence $(\QQ[\symm_n])_{n \geq 1}$ of regular representations is not multiplicity stable because
$\QQ[\symm_n]$ has a copy of the sign representation $S^{(1^n)}$ for each $n$.
Theorem~\ref{multiplicity-theorem} will be proven in Section~\ref{Proofs}.

We next turn our attention to constructing maps between the $X_{n,k}$, so as to prove representation stability statements. Let $\one_k \subseteq \CC^k$ be the 
line of constant vectors in $\CC^k$ and let $\iota: \CC^k \hookrightarrow \CC^{k+1}$ be 
$\iota: (x_1, \dots, x_k) \mapsto (x_1, \dots, x_k, 0)$.
We have natural injections
\begin{equation*}
i: X_{n,k} \hookrightarrow X_{n+1,k} \quad \text{and} \quad j: X_{n,k} \hookrightarrow X_{n+1,k+1}
\end{equation*}
given by
\begin{equation*}
i: (\ell_1, \dots, \ell_n) \mapsto (\ell_1, \dots, \ell_n, \one_k) \quad \text{and} \quad
j: (\ell_1, \dots, \ell_n) \mapsto (\iota(\ell_1), \dots, \iota(\ell_n), \one_{k+1}).
\end{equation*}
For $d \geq 0$ fixed, let $f_n: H_d(X_{n,k}) \rightarrow H_d(X_{n+1,k})$ be the homology map 
induced by $i$ and let $g_n: H_d(X_{n,k}) \rightarrow H_d(X_{n+1,k+1})$ be the homology map
induced by $j$.

\begin{theorem}
\label{stability-theorem}
Fix a homological degree $d \geq 0$. 
\begin{enumerate}
\item For fixed $k \geq 0$, the sequence $(H_d(X_{n,k}))_{n \geq 1}$ is representation stable
with respect to the maps $f_n$.
\item For fixed $m \geq 0$, the sequence $(H_d(X_{n, n-m}))_{n \geq 1}$ is representation stable
with respect to the maps $g_n$.
\end{enumerate}
\end{theorem}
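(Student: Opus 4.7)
The plan is to reduce the conditions of Definition~\ref{representation-stability-definition} to the multiplicity stability of Theorem~\ref{multiplicity-theorem} via the FI-module framework of \cite{CEF}. Equivariance and the trivial transposition condition together allow one to extend each sequence with its maps $f_n$ (resp.\ $g_n$) to a functor from the category of finite sets and injections to $\QQ$-vector spaces: for an injection $\alpha\colon[n]\hookrightarrow[N]$, factored uniquely as $\sigma\circ\iota_{n,N}$ with $\sigma\in\symm_N$ and $\iota_{n,N}$ the standard inclusion, set $\alpha_*:=\sigma_*\circ f_{N-1}\circ\cdots\circ f_n$; functoriality reduces, via condition~(4) of Definition~\ref{representation-stability-definition} combined with equivariance, to the trivial action on the image of transpositions fixing $\{1,\dots,n\}$. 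Once this FI-module structure is present, the main equivalence of \cite{CEF} converts multiplicity stability into finite generation, which in turn implies injectivity of $f_n$ in the stable range and generation of $V_{n+1}$ by the $\symm_{n+1}$-span of $f_n(V_n)$.

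Equivariance of $f_n$ and $g_n$ is immediate, since $\symm_n$ only permutes the first $n$ lines of a configuration. For the trivial transposition condition, the iterated map $f_{n+1}\circ f_n$ appends two copies of $\one_k$, so $(n+1,n+2)\in\symm_{n+2}$ fixes its image pointwise. For $g_{n+1}\circ g_n$ the last two lines of the image are $\iota(\one_{k+1})$ and $\one_{k+2}$, two distinct lines lying in the common two-plane $P\subset\CC^{k+2}$ spanned by $(1,\dots,1,0)$ and $(0,\dots,0,1)$. Since $\mathrm{SL}_2(\CC)$ acts transitively on ordered pairs of distinct lines in $P$ and is path-connected, there is a continuous family of pairs of distinct lines in $P$ interpolating between $(\iota(\one_{k+1}),\one_{k+2})$ and its swap, giving a homotopy inside $X_{n+2,k+2}$ between $g_{n+1}\circ g_n$ and its composition with $(n+1,n+2)$. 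The spanning condition is preserved throughout because the lines $\iota^2(\ell_i)$ span $\iota^2(\CC^k)$ and any two distinct lines of $P$ span $P$, and $\iota^2(\CC^k)\oplus P=\CC^{k+2}$.

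With these two conditions verified, the sequences $(H_d(X_{n,k}),f_n)$ and $(H_d(X_{n,n-m}),g_n)$ become FI-modules, and Theorem~\ref{multiplicity-theorem} translates via \cite{CEF} into finite generation. The remaining injectivity and generation conditions of Definition~\ref{representation-stability-definition} then follow from the structure theory of finitely generated FI-modules.

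The principal obstacle is the trivial transposition condition for part~(2): unlike in part~(1), where the two appended lines literally coincide, the $g_n$-stabilization introduces two distinct lines, so one must produce a concrete homotopy in $X_{n+2,k+2}$ and verify that the spanning condition does not degenerate along the path of line pairs in $P$. The argument above handles this, but care is needed to identify $P$ correctly and to verify that $P$ complements $\iota^2(\CC^k)$ inside $\CC^{k+2}$.
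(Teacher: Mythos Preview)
Your argument has a genuine gap at the step where you invoke \cite{CEF} to ``convert multiplicity stability into finite generation.'' The equivalence in \cite{CEF} is between finite generation of an $\FI$-module and \emph{representation stability} (all four conditions of Definition~\ref{representation-stability-definition}) of the associated consistent sequence---not between finite generation and multiplicity stability. Multiplicity stability of the underlying sequence does \emph{not} imply finite generation of a given $\FI$-module structure on it. A simple counterexample: take $V(n)=\QQ$ with trivial $\symm_n$-action and set $V(f)=0$ for every non-bijective injection $f$. This is a perfectly good $\FI$-module whose underlying sequence is multiplicity stable, yet it is not finitely generated (and the maps $V(\iota_n)$ are not injective). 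So from Theorem~\ref{multiplicity-theorem} together with your verified conditions~(2) and~(4), you cannot conclude conditions~(1) and~(3) for the \emph{specific} maps $f_n$ and $g_n$. The theorem you quote from \cite{CEF} only says that \emph{some} maps exist making the sequence representation stable, not that yours do.

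Your verifications of equivariance and of the trivial transposition condition are correct, and your homotopy argument for part~(2) via the path-connectedness of pairs of distinct lines in the plane $P$ is clean and essentially the same idea the paper uses in its geometric proof (where $GL_{r-m}(\CC)$ plays the analogous role). What is missing is a direct argument for finite generation. The paper does this in two ways: algebraically, by realizing the cohomology as a quotient of $\QQ[x_1,\dots,x_n]_s$ and using that the ideals $I_{n,k}$ are compatible with the $\coFI$ structure; geometrically, by using the affine paving (Theorem~\ref{affine-paving}) to embed $H_d(X_{n,k})$ into $H_d((\PP^{k-1})^n)$ as $\FI$-modules and then applying Noetherianity (Theorem~\ref{fi-is-noetherian}). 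Either route closes the gap, but you need one of them---you cannot bootstrap from multiplicity stability alone.
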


Theorem~\ref{stability-theorem} implies Theorem~\ref{multiplicity-theorem}.
Both of these results will be proven in Section~\ref{Proofs}.

The irreducible decompositions of the homology representations $H_d(X_{n,k})$ appearing in Theorems~\ref{multiplicity-theorem} and
\ref{stability-theorem} have been described in terms of statistics on standard Young tableaux,
but this calculation 
relies  on a significant amount of combinatorics, algebra, and geometry
\cite{HRS, PR, WMultiset}.  
The  leitmotif  suggests that 
Theorems~\ref{multiplicity-theorem} and \ref{stability-theorem} should be provable
without using the full machinery of \cite{HRS, PR, WMultiset}.

We will give three different proofs of our stability results, each of a different flavor (combinatorial,
algebraic, and geometric)
and relying on a different size of `black box' 
on the structure of the groups $H_d(X_{n,k})$.
\begin{center}
\begin{tabular}{c | c  }
Proof Method & Black Box Size  \\ \hline 
Combinatorial & Large (see Theorem~\ref{graded-frobenius})  \\  \hline
Algebraic & Medium (see Theorem~\ref{cohomology-presentation}) \\ \hline
Geometric & Small (see Theorem~\ref{affine-paving})
\end{tabular}
\end{center}
In Theorem~\ref{higher-dimensional-stability} we extend our algebraic approach to obtain stability results
for homology groups of moduli spaces of spanning configurations of higher dimensional subspaces;
see the right of Figure~\ref{three-points}.
These latter homology groups were presented in \cite{RhoadesSpanning} in terms of a polynomial ring quotient,
but their decomposition into irreducible $\symm_n$-modules is unknown; this is a further illustration
of the  leitmotif.

The remainder of the paper is organized as follows. In {\bf Section~\ref{Black}} we 
discuss the black boxes we will use to prove Theorems~\ref{multiplicity-theorem} and
\ref{stability-theorem}.
In {\bf Section~\ref{Proofs}} we give our three proofs of these theorems.
In {\bf Section~\ref{Other}} we extend Theorem~\ref{stability-theorem} in two directions:
to higher-dimensional subspaces (Theorem~\ref{higher-dimensional-stability}) 
and to diagonal $\symm_n$-actions on rings with commuting and anticommuting generators 
(Propositions~\ref{s-is-stable} and \ref{r-is-stable}).

\section{Black Boxes}
\label{Black}

Let $Z$ be a smooth complex algebraic variety. An {\em affine paving} of $Z$ is a chain
\begin{equation*}
\varnothing = Z_0 \subset Z_1  \subset \cdots \subset Z_m = Z 
\end{equation*}
of subvarieties of $Z$ such
that each difference $Z_i - Z_{i-1}$ is isomorphic to a disjoint union of affine spaces.
If $\varnothing = Z_0 \subset Z_1  \subset \cdots \subset Z_m = Z$ is an affine paving and $0 \leq j \leq m$,
let $U := Z - Z_j$.
The inclusion $i: U \hookrightarrow Z$ induces maps on homology and cohomology:
\begin{equation}
i_*: H_{\bullet}(U) \rightarrow H_{\bullet}(Z) \quad \text{and} \quad
i^*: H^{\bullet}(Z) \rightarrow H^{\bullet}(U).
\end{equation}
When $U$ arises from an affine paving of $Z$ as above, the map $i_*$ is injective and the map
$i^*$ is surjective.

The standard affine paving of complex projective space $\PP^{k-1}$ given in coordinates by
\begin{equation*} \varnothing \subset
[ \star : 0 : \cdots : 0 : 0 ] \subset  [\star : \star : \cdots : 0 : 0 ] \subset \cdots \subset 
[\star : \star : \cdots : \star : 0] \subset [\star : \star : \cdots : \star : \star ] = \PP^{k-1}.
\end{equation*}
Taking the $n$-fold product of this paving with itself yields a product paving of $(\PP^{k-1})^n$, but this 
paving interacts poorly with the inclusion $X_{n,k} \subseteq (\PP^{k-1})^n$.
A nonstandard affine paving of $(\PP^{k-1})^n$ was crucial to the presentation of 
the cohomology of $X_{n,k}$ in \cite{PR}.

\begin{theorem}
\label{affine-paving} (\cite{PR})
{\em ``Small Black Box"} Let $k \leq n$ be positive integers.
There exists an affine paving $Z_0 \subset Z_1 \subset \cdots \subset Z_m$ of the $n$-fold 
product $(\PP^{k-1})^n$ such that $X_{n,k} = (\PP^{k-1})^n - Z_i$ for some $i$.
Consequently, if $i: X_{n,k} \hookrightarrow (\PP^{k-1})^n$ is inclusion, the induced maps 
on homology and cohomology
\begin{equation}
i_*: H_{\bullet}(X_{n,k}) \hookrightarrow H_{\bullet}((\PP^{k-1})^n) \quad \text{and} \quad
i^*: H^{\bullet}((\PP^{k-1})^n) \twoheadrightarrow H^{\bullet}(X_{n,k})
\end{equation}
are injective and surjective, respectively.
\end{theorem}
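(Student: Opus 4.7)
The plan is to stratify $(\PP^{k-1})^n$ by the dimension sequence of partial sums of lines, verify that the pieces are affine cells, and then observe that $X_{n,k}$ is the union of the top strata. Concretely, for each sequence $\underline{d} = (d_0,d_1,\dots,d_n)$ with $d_0 = 0$, $d_i - d_{i-1} \in \{0,1\}$, and $d_n \leq k$, I would define the locally closed subset
\begin{equation*}
C_{\underline{d}} = \{(\ell_1, \dots, \ell_n) \in (\PP^{k-1})^n \,:\, \dim(\ell_1 + \cdots + \ell_i) = d_i \text{ for } 1 \leq i \leq n\}.
\end{equation*}
These partition $(\PP^{k-1})^n$, and $X_{n,k} = \bigsqcup_{d_n = k} C_{\underline{d}}$.

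The first main step is to exhibit each $C_{\underline{d}}$ as an affine space. I would proceed by induction on $n$: extending a configuration in $C_{(d_0,\dots,d_{n-1})}$ by a new line $\ell_n$ splits into two cases. If $d_n = d_{n-1} + 1$, then $\ell_n$ lies in the open complement of a fixed projective subspace $\PP^{d_{n-1}-1} \subset \PP^{k-1}$, which has a standard affine-space parametrization; if $d_n = d_{n-1}$, then $\ell_n$ lies in $\PP^{d_{n-1}-1}$ itself, which admits its own affine paving compatible with further stratification. A careful choice of adapted coordinates (as in \cite{PR}) then realizes $C_{\underline{d}}$ as an affine cell.

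The second main step is ordering the strata to form the filtration. The dimension function $(\ell_1,\dots,\ell_n) \mapsto \dim(\ell_1 + \cdots + \ell_n)$ is lower semicontinuous (a rank function can only drop under specialization), so the subset $\{d_n < k\}$ is closed in $(\PP^{k-1})^n$. Hence I can choose any closure-refining linear order on the index set of strata in which every $\underline{d}$ with $d_n < k$ precedes every $\underline{d}$ with $d_n = k$; taking $Z_j$ to be the union of the first $j$ cells gives an affine paving, and the index $i$ at which we have exhausted the $d_n < k$ cells satisfies $(\PP^{k-1})^n - Z_i = X_{n,k}$.

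The final step is the (co)homology consequence. Since $(\PP^{k-1})^n$ and its open complement $X_{n,k}$ both inherit affine pavings, their homology is concentrated in even degrees, so the long exact sequence of the pair $((\PP^{k-1})^n, Z_i)$ degenerates into short exact sequences, yielding the injection $i_*$ and surjection $i^*$. I expect the main obstacle to be the explicit coordinatization that makes each $C_{\underline{d}}$ an affine space: the "skip" steps $d_i = d_{i-1}$ force $\ell_i$ into a subspace depending on the previous lines, so the parametrization is not a simple product and requires the adapted coordinate framework of \cite{PR} to become linear in the appropriate sense.
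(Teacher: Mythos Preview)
The paper does not prove this theorem: it is stated as a black box and attributed to \cite{PR}, with no argument given here. So there is no ``paper's own proof'' to compare against; the relevant question is whether your outline would actually reproduce the result of \cite{PR}.

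There is a genuine gap. Your strata $C_{\underline{d}}$ are \emph{not} affine spaces, so the filtration you build is not an affine paving. Already for $n=k=2$ the open stratum $C_{(0,1,2)} = X_{2,2} = (\PP^1)^2 \setminus \Delta$ has Euler characteristic $2$, whereas $\mathbb{A}^2$ has Euler characteristic $1$; so $C_{(0,1,2)}\not\cong\mathbb{A}^2$. Both branches of your inductive step are faulty. In the ``jump'' case $d_i = d_{i-1}+1$, the line $\ell_i$ ranges over $\PP^{k-1}\setminus \PP^{d_{i-1}-1}$, and this complement is \emph{not} an affine space once $d_{i-1} < k-1$ (it is an $\mathbb{A}^{d_{i-1}}$-bundle over $\PP^{k-d_{i-1}-1}$); moreover the linear subspace $\PP^{d_{i-1}-1}$ being removed is not ``fixed'' but varies with $(\ell_1,\dots,\ell_{i-1})$. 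In the ``stay'' case $d_i = d_{i-1}$, the line $\ell_i$ ranges over a moving $\PP^{d_{i-1}-1}$, which again is not affine and again depends on the earlier lines. Saying that these projective pieces ``admit their own affine paving'' does not repair the argument, because the further cells one would cut out are defined relative to a flag that moves with the base point, so one must check that the resulting decomposition is globally trivial, not merely fiberwise affine.

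What \cite{PR} actually does is pass to a much finer stratification: the cells are indexed by words $w\colon [n]\to[k]$ (not merely by the dimension profile $\underline{d}$), with each cell cut out by echelon/pivot conditions on the matrix of column vectors spanning $(\ell_1,\dots,\ell_n)$. These echelon conditions are what pin down a \emph{fixed} coordinate flag against which everything is measured, so that each cell really is isomorphic to an affine space; the spanning locus $X_{n,k}$ is then precisely the union of cells indexed by \emph{surjective} words. Your semicontinuity and long-exact-sequence paragraphs are fine once a genuine affine paving is in hand, but the dimension-sequence stratification is too coarse to supply one.
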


The following lemma allows us to transfer between homology
and cohomology at will.

\begin{lemma}
\label{homology-cohomology-transfer}
Let $X$ be $X_{n,k}$ or $(\PP^{k-1})^n$ and $d \geq 0$.
We have
a natural $\symm_n$-equivariant
isomorphism $\mathrm{Hom}_{\QQ}(H^i(X), \QQ) \cong H_i(X)$.  
\end{lemma}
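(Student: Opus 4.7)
The plan is a direct application of the universal coefficient theorem, combined with a finite-dimensionality check to justify the canonical double-dual identification, and naturality to obtain $\symm_n$-equivariance.

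First, I would verify that $H_i(X; \QQ)$ is finite-dimensional for each $i$ in both cases. For $X = (\PP^{k-1})^n$, this is classical: it is a smooth projective variety with a finite cell decomposition (the product of the standard affine paving of $\PP^{k-1}$ with itself $n$ times). For $X = X_{n,k}$, Theorem~\ref{affine-paving} exhibits $X_{n,k}$ as $(\PP^{k-1})^n - Z_i$ where the ambient space carries an affine paving extending one of $Z_i$, so $X_{n,k}$ itself inherits a finite affine paving and hence has finite-dimensional rational homology in each degree.

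Second, apply the universal coefficient theorem for cohomology. Since we are working with coefficients in the field $\QQ$, the relevant Ext terms vanish and we get a natural isomorphism
\begin{equation*}
H^i(X; \QQ) \;\cong\; \mathrm{Hom}_{\QQ}(H_i(X; \QQ), \QQ).
\end{equation*}
Dualizing both sides over $\QQ$ and invoking finite-dimensionality from the first step gives the canonical double-dual isomorphism
\begin{equation*}
\mathrm{Hom}_{\QQ}(H^i(X; \QQ), \QQ) \;\cong\; \mathrm{Hom}_{\QQ}\bigl(\mathrm{Hom}_{\QQ}(H_i(X; \QQ), \QQ), \QQ\bigr) \;\cong\; H_i(X; \QQ).
\end{equation*}

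Finally, for the $\symm_n$-equivariance, I would use naturality. Each $w \in \symm_n$ acts on $X$ by a homeomorphism, inducing pushforward and pullback maps on homology and cohomology respectively. The universal coefficient isomorphism is natural with respect to continuous maps, so the whole chain of isomorphisms above intertwines the induced $\symm_n$-actions, producing an $\symm_n$-equivariant isomorphism as claimed. There is no genuine obstacle here; the only point that requires care is confirming finite-dimensionality (so that the double-dual step is valid), and this is handled cleanly by the affine paving from the small black box.
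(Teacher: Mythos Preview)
Your proposal is correct and follows essentially the same approach as the paper: apply the Universal Coefficient Theorem over $\QQ$ so that the Ext term vanishes, dualize using finite-dimensionality, and note naturality for $\symm_n$-equivariance. You are in fact a bit more careful than the paper in explicitly justifying finite-dimensionality via the affine paving, which the paper leaves implicit.
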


\begin{proof}
Recall that we are considering rational cohomology.
The Universal Coefficient Theorem gives a short exact sequence
\begin{equation}
0 \rightarrow \mathrm{Ext}^1_{\QQ}(H_{d-1}(X), \QQ) \rightarrow H^d(X) \xrightarrow{h}
\mathrm{Hom}_{\QQ}(H_d(X), \QQ) \rightarrow 0
\end{equation}
where $h$ is the map $h([f])[x] = f(x)$ for any $[f] \in H^d(X)$ and any $[x] \in H_d(X)$.
Since $\QQ$ is a field, the $\mathrm{Ext}$-group vanishes and $h$ is a manifestly $\symm_n$-equivariant
isomorphism.  When $H_d(X)$ is a finite-dimensional $\QQ$-vector space, we may dualize to get the desired
isomorphism.
\end{proof}

For $1 \leq d \leq n$, let $e_d = e_d(x_1, \dots, x_n) \in \QQ[x_1, \dots, x_n]$ be the degree $d$ elementary
symmetric polynomial
\begin{equation*}
e_d := \sum_{1 \leq i_1 < \cdots < i_d \leq n} x_{i_1} \cdots x_{i_d}.
\end{equation*}
For positive integers $k \leq n$, let $I_{n,k} \subseteq \QQ[x_1, \dots, x_n]$ be the ideal
\begin{equation}
I_{n,k} := \langle x_1^k, x_2^k, \dots, x_n^k, e_n, e_{n-1}, \dots, e_{n-k+1} \rangle \subseteq \QQ[x_1, \dots, x_n]
\end{equation}
and let $R_{n,k} := \QQ[x_1, \dots, x_n]/I_{n,k}$ be the corresponding quotient ring.
Since the ideal $I_{n,k}$ is homogeneous and $\symm_n$-stable, the quotient $R_{n,k}$ is a graded ring
and a graded $\symm_n$-module.
The following `black box' result follows quickly from \cite{PR}.

\begin{theorem}
\label{cohomology-presentation}  {\em ``Medium Black Box"}
Let $k \leq n$ be positive integers. We have a presentation of the rational cohomology ring
$H^{\bullet}(X_{n,k}) = R_{n,k}$ where the variable $x_i$ represents the Chern class $c_1(\ell_i^*) \in H^2(X_{n,k})$ 
of the 
tautological line bundle $\ell_i^* \twoheadrightarrow X_{n,k}$.
\end{theorem}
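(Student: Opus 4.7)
My plan is to combine the small black box (Theorem~\ref{affine-paving}) with a short Chern class computation and a dimension count, with the two latter ingredients essentially supplied by \cite{PR} and \cite{HRS}. First, Theorem~\ref{affine-paving} gives a surjection $i^{*}: H^{\bullet}((\PP^{k-1})^n) \twoheadrightarrow H^{\bullet}(X_{n,k})$ coming from the inclusion $i: X_{n,k} \hookrightarrow (\PP^{k-1})^n$. The cohomology of the product has the standard presentation $H^{\bullet}((\PP^{k-1})^n) = \QQ[x_1, \ldots, x_n]/\langle x_1^k, \ldots, x_n^k\rangle$, where $x_i$ is the first Chern class of the pullback of $\mathcal{O}_{\PP^{k-1}}(1)$ via projection to the $i$-th factor. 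This line bundle restricts along $i$ to $\ell_i^{*}$, so the composite
\[
\QQ[x_1, \ldots, x_n] \twoheadrightarrow H^{\bullet}((\PP^{k-1})^n) \xrightarrow{\;i^{*}\;} H^{\bullet}(X_{n,k})
\]
sends $x_i \mapsto c_1(\ell_i^{*})$ and automatically kills the $x_i^k$.

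Next I would produce the symmetric function relations via Chern classes. The spanning condition gives a surjection of vector bundles $\ell_1 \oplus \cdots \oplus \ell_n \twoheadrightarrow \underline{\CC^k}$ over $X_{n,k}$ (summation of vectors on each fiber). Dualizing yields a short exact sequence
\[
0 \to \underline{\CC^k} \to \ell_1^{*} \oplus \cdots \oplus \ell_n^{*} \to K \to 0
\]
for a vector bundle $K$ of rank $n - k$. Multiplicativity of the total Chern class gives
\[
\prod_{i=1}^n (1 + x_i) \;=\; c(\ell_1^{*} \oplus \cdots \oplus \ell_n^{*}) \;=\; c(K),
\]
and since $K$ has rank $n - k$ this Chern class vanishes in every degree $d > n-k$. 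The degree $d$ part of $\prod_i (1+x_i)$ is exactly $e_d(x_1, \ldots, x_n)$, so $e_{n-k+1}, \ldots, e_n$ all vanish in $H^{\bullet}(X_{n,k})$. The surjection therefore factors through a surjection $\varphi: R_{n,k} \twoheadrightarrow H^{\bullet}(X_{n,k})$ sending $x_i \mapsto c_1(\ell_i^{*})$, as desired.

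To upgrade $\varphi$ to an isomorphism I would compare total dimensions over $\QQ$. The affine paving of Theorem~\ref{affine-paving} restricts to an affine paving of $X_{n,k}$ whose cells, as enumerated in \cite{PR}, are indexed by ordered set partitions of $[n]$ into $k$ blocks; this gives $\dim_{\QQ} H^{\bullet}(X_{n,k}) = k! \cdot \Stir(n,k)$. On the other side, \cite{HRS} establishes that $\dim_{\QQ} R_{n,k} = k! \cdot \Stir(n,k)$ as well, for example by exhibiting an explicit monomial basis. Since a surjection between finite-dimensional $\QQ$-vector spaces of equal dimension is an isomorphism, $\varphi$ is an isomorphism. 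The main obstacle here is really just bookkeeping: the serious geometric input lives in \cite{PR} and the serious algebraic input lives in \cite{HRS}, and the only genuinely new ingredient is the elementary Chern class identity forcing $e_d = 0$ for $d > n - k$.
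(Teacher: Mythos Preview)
Your argument is correct, but it takes a different (and more detailed) route than the paper. The paper's own proof is a two-line reduction: it cites \cite{PR} for the identical statement over $\ZZ$, observes that the integral cohomology is concentrated in even degrees, and then applies the Universal Coefficient Theorem to tensor up to $\QQ$. By contrast, you unpack the geometric content: you use the Small Black Box to produce the surjection from $\QQ[x_1,\dots,x_n]$, you give an explicit Chern class argument (via the short exact sequence $0 \to \underline{\CC^k} \to \bigoplus \ell_i^* \to K \to 0$ with $\mathrm{rank}\,K = n-k$) to force the vanishing of $e_{n-k+1},\dots,e_n$, and you finish with a dimension count drawing on both \cite{PR} and \cite{HRS}. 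This is essentially a reconstruction of the proof inside \cite{PR}, so the two approaches are not really independent; what your write-up buys is a transparent geometric reason for the elementary symmetric relations, whereas the paper's proof buys brevity by treating the whole presentation as a citation. One minor note: your dimension argument leans on the explicit cell enumeration from \cite{PR} and the basis count from \cite{HRS}, so despite the self-contained feel you are still importing roughly the same black boxes as the paper, just at a slightly finer granularity.
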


\begin{proof}
In \cite{PR} the corresponding statement is proven over $\ZZ$: the integral cohomology
$H^{\bullet}(X_{n,k}; \ZZ)$ is presented as the quotient 
$R_{n,k}^{\ZZ} := \ZZ[x_1, \dots, x_n]/\langle x_1^k, \dots, x_n^k, e_n, \dots, e_{n-k+1} \rangle$ with the variable
$x_i$ representing the Chern class $c_1(\ell_i^*) \in H^2(X_{n,k}; \ZZ)$.  
In particular, the integral cohomology of $X_{n,k}$ 
is concentrated in even dimensions, so the Universal Coefficient Theorem implies 
\begin{equation}
H^{\bullet}(X_{n,k}; \QQ) = \QQ \otimes_{\ZZ} H^{\bullet}(X_{n,k}; \ZZ) = \QQ \otimes_{\ZZ} R_{n,k}^{\ZZ} =
R_{n,k}
\end{equation}
as desired.
\end{proof}
Given Theorem~\ref{cohomology-presentation} and Lemma~\ref{homology-cohomology-transfer}, describing the 
$\symm_n$-isomorphism type of $H_d(X_{n,k})$ is equivalent to decomposing the degree $d$ piece of $R_{n,k}$
into irreducibles.
The graded isomorphism type of $R_{n,k}$ was calculated by Haglund, Rhoades, and Shimozono \cite{HRS}.
To describe this isomorphism type, we need some notation.

Let $\Lambda = \bigoplus_{n \geq 0} \Lambda_n$ be the ring of symmetric functions over the ground field
$\QQ(q,t)$.  The  space $\Lambda_n$ of homogeneous degree $n$ symmetric functions has a basis
of {\em Schur functions} $\{ s_{\lambda} \,:\, \lambda \vdash n \}$.

If $V$ is a finite-dimensional $\symm_n$-module with irreducible decomposition 
$V \cong \bigoplus_{\lambda \vdash n} m_{\lambda} S^{\lambda}$, the {\em Frobenius image} of $V$ 
is the symmetric function $\Frob(V) := \sum_{\lambda \vdash n} m_{\lambda} s_{\lambda} \in \Lambda_n$.
More generally, if $V = \bigoplus_{i \geq 0} V_i$ is a graded $\symm_n$-module with each
$V_i$ finite-dimensional, the {\em graded Frobenius image} is 
$\grFrob(V; q) := \sum_{i \geq 0} \Frob(V_i) \cdot q^i$.

Given $\lambda \vdash n$,
a {\em standard Young tableau} of shape $\lambda$ is a filling of the Ferrers diagram of $\lambda$
with $1, 2, \dots, n$ which is increasing down columns and across rows.
We let $\SYT(\lambda)$ be the family of standard Young tableaux of shape $\lambda$
and let $\SYT(n)$ be the family of all standard Young tableaux with $n$ boxes.
If $T \in \SYT(n)$, we write $\mathrm{shape}(T) \vdash n$ for the underlying partition of $T$.

Let $T$ be a standard Young tableau with $n$ boxes. 
An index $1 \leq i \leq n-1$ is called a {\em descent} of $T$
if $i$ appears in a row strictly above the row containing $i+1$ in $T$.
Let $\des(T)$ be the number of descents of $T$ and $\maj(T)$ be the sum of the descents of $T$.
For example, the tableau $T$  shown below has descents 1, 3, and 6 so that $\des(T) = 3$ and 
$\maj(T) = 1+3+6 = 10$.
\begin{small}
\begin{center}
\begin{young}
1 & 3 & 5 & 6 \cr
2 & 4 & 8 \cr
7
\end{young}
\end{center}
\end{small}
We  recall the $q$-analogues of numbers, factorials, and binomial coefficients:
\begin{equation}
[n]_q := 1 + q + \cdots + q^{n-1} \quad
[n]!_q := [n]_q [n-1]_q \cdots [1]_q \quad
{n \brack k}_q := \frac{[n]!_q}{[k]!_q \cdot [n-k]!_q}.
\end{equation}
The graded isomorphism type of $H^{\bullet}(X_{n,k})$ may be described in terms of standard tableaux as follows.

\begin{theorem}
\label{graded-frobenius} (\cite{HRS, PR})
{\em ``Large Black Box"}
Let $k \leq n$ be positive integers. We have
\begin{equation}
\grFrob(H^{\bullet}(X_{n,k}); \sqrt{q}) = \sum_{T \in \SYT(n)} q^{\maj(T)} {n - \des(T) - 1 \brack n-k}_q 
s_{\mathrm{shape}(T)}.
\end{equation}
\end{theorem}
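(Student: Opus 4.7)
The plan is to reduce the statement to a purely algebraic identity about the polynomial quotient $R_{n,k}$ via Theorem~\ref{cohomology-presentation}, and then verify that identity by induction. Since the affine paving from Theorem~\ref{affine-paving} forces $H^{\bullet}(X_{n,k})$ to be concentrated in even degrees, and since each variable $x_i$ lives in $H^2(X_{n,k})$ under the identification with $R_{n,k}$, the substitution $q \mapsto \sqrt{q}$ in $\grFrob(H^{\bullet}(X_{n,k}); -)$ precisely matches $\grFrob(R_{n,k}; q)$: degree $d$ on the ring corresponds to degree $2d$ on the cohomology. Thus the theorem reduces to
\begin{equation*}
\grFrob(R_{n,k}; q) = \sum_{T \in \SYT(n)} q^{\maj(T)} {n - \des(T) - 1 \brack n-k}_q s_{\shape(T)}.
\end{equation*}

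To verify this Frobenius formula, I would proceed by induction on $n$, matching an algebraic recursion for $R_{n,k}$ against the combinatorial recursion on standard Young tableaux obtained by removing the box containing $n$. On the algebraic side, the goal is a short exact sequence of graded $\symm_{n-1}$-modules relating $R_{n,k}$ to $R_{n-1,k-1}$ and $R_{n-1,k}$: quotienting further by $x_n$ produces one term, and a monomial-basis complement isolating the "$x_n \neq 0$" contribution (with an appropriate graded shift) accounts for the other. After restricting from $\symm_n$ to $\symm_{n-1}$ and reinducing via the branching rule, this yields a recursion for $\grFrob(R_{n,k}; q)$ in terms of $\grFrob(R_{n-1,k}; q)$ and $\grFrob(R_{n-1,k-1}; q)$.

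On the combinatorial side, removing the box labeled $n$ from $T \in \SYT(n)$ yields $T' \in \SYT(n-1)$, and one tracks how $\des$, $\maj$, and the $q$-binomial factor change according to whether $n-1 \in \Des(T)$ and according to which corner $n$ occupies. Combined with the Pieri rule for the product $s_{\shape(T')} \cdot s_{(1)}$ that arises after inducing up, a careful case analysis shows the right-hand side obeys the same recurrence as $\grFrob(R_{n,k}; q)$. The base case $n = k = 1$, where $R_{1,1} = \QQ$ and both sides equal $s_{(1)}$, closes the induction.

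The main obstacle is producing the clean algebraic recurrence for $R_{n,k}$, since the generators of $I_{n,k}$ (the powers $x_i^k$ and the top elementary symmetric polynomials $e_{n-k+1}, \ldots, e_n$) interact in a delicate way. In \cite{HRS} this is handled by first exhibiting an explicit Gr\"{o}bner basis for $I_{n,k}$, which yields a "skip monomial" $\QQ$-basis for $R_{n,k}$ and thus its Hilbert series; the Frobenius upgrade then comes from a symmetric function identity originating in the $t = 0$ specialization of the Delta Conjecture. I expect the most technical step to be the bijective identity matching ordered-set-partition enumerators to the tableau sum on the right-hand side, which is proved via a variant of the Robinson--Schensted correspondence that respects the $\maj$ and $\des$ statistics.
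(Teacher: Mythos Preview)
The paper does not prove this theorem; it is stated as the ``Large Black Box'' and cited from \cite{HRS, PR}, with only a brief remark afterward explaining the $\sqrt{q}$ (exactly your first paragraph) and noting that the argument in \cite{HRS} relies on Theorem~\ref{cohomology-presentation} together with additional combinatorics, in particular Wilson's ordered-multiset-partition RSK \cite{WMultiset}. So there is no ``paper's own proof'' to compare against beyond that one-sentence summary of ingredients.

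Your outline is broadly faithful to what \cite{HRS} actually does: the reduction to $\grFrob(R_{n,k};q)$, the short exact sequence of $\symm_{n-1}$-modules linking $R_{n,k}$ to $R_{n-1,k}$ and $R_{n-1,k-1}$, the Gr\"obner basis and skip-monomial basis, and the RSK-type bijection are all genuine ingredients there. One caution about the inductive step as you phrase it: the short exact sequence only determines $\Res^{\symm_n}_{\symm_{n-1}} R_{n,k}$, and ``reinducing via the branching rule'' does not recover the $\symm_n$-structure uniquely, so induction on $n$ alone cannot close the argument. In \cite{HRS} this gap is filled by combining the monomial-basis Hilbert series with a separate equivariant bound (a skewing/annihilator comparison against a module whose graded Frobenius image is already known), after which the combinatorial count forces equality. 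Your final paragraph already gestures toward exactly these pieces, so as a roadmap the proposal is sound; just be aware that the induction-and-branching portion by itself is not enough.
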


The presence of $\sqrt{q}$ rather than $q$ on the left-hand-side
 of Theorem~\ref{graded-frobenius} stems from the generators
$x_i \leftrightarrow c_1(\ell_i^*) \in H^2(X_{n,k})$ of the polynomial ring $\QQ[x_1, \dots, x_n]$
representing degree 2 elements in cohomology.
Theorem~\ref{graded-frobenius} is our largest black box; it depends on the cohomology
presentation of Theorem~\ref{cohomology-presentation} as well as additional algebraic and combinatorial 
arguments \cite{HRS} (such as a generalization of the RSK correspondence 
from words to ordered multiset partitions due to Wilson \cite{WMultiset}).

\section{Proofs of Theorems~\ref{multiplicity-theorem} and \ref{stability-theorem}: Combinatorics, Algebra, and 
Geometry}
\label{Proofs}

\subsection{Combinatorics}
We prove the uniform multiplicity stability asserted in Theorem~\ref{multiplicity-theorem}
using the combinatorics of tableau statistics. 
We will assume our Large Black Box Theorem~\ref{graded-frobenius} as well as the standard combinatorial
interpretation of the $q$-binomial coefficient
\begin{equation}
{n \brack k}_q = \sum_{\lambda} q^{|\lambda|}
\end{equation}
where the sum is over all partitions $\lambda$ whose Ferrers diagrams fit inside a $k \times (n-k)$ rectangle.
We will also use  the following observation.

\begin{observation}
\label{major-index-observation}
Let $\mu$ be a partition and $n \geq |\mu| + \mu_1$.  Any standard tableau $T$ of shape $\mu[n]$
has $\maj(T) \geq |\mu|$.
\end{observation}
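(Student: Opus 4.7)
The plan is to deduce the observation from the classical inequality
\[
\maj(T) \;\geq\; n(\lambda) \;:=\; \sum_{i \geq 1}(i-1)\lambda_i
\]
valid for every $T \in \SYT(\lambda)$. Given this bound, the observation reduces to a one-line computation: since the shape $\mu[n]$ has $(\mu[n])_1 = n - |\mu|$ (contributing nothing to $n$) and $(\mu[n])_i = \mu_{i-1}$ for $i \geq 2$, reindexing with $j = i-1$ gives
\[
n(\mu[n]) \;=\; \sum_{i \geq 2}(i-1)\mu_{i-1} \;=\; \sum_{j \geq 1} j \cdot \mu_j \;\geq\; \sum_{j \geq 1}\mu_j \;=\; |\mu|,
\]
using only $j \geq 1$ termwise. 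Chaining yields $\maj(T) \geq n(\mu[n]) \geq |\mu|$.

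To justify the bound $\maj(T) \geq n(\lambda)$, I would invoke Stanley's $q$-analog of the hook length formula,
\[
\sum_{T \in \SYT(\lambda)} q^{\maj(T)} \;=\; q^{n(\lambda)} \cdot \frac{[n]!_q}{\prod_{c \in \lambda}[h(c)]_q},
\]
where $h(c)$ denotes the hook length at the cell $c$. The right-hand factor is a polynomial in $q$ with non-negative integer coefficients and constant term $1$, so the minimum power of $q$ appearing on the left is exactly $q^{n(\lambda)}$; equivalently, every $T \in \SYT(\lambda)$ has $\maj(T) \geq n(\lambda)$.

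The main obstacle is supplying this auxiliary bound. A self-contained alternative to Stanley's formula is induction on $|\lambda|$, removing the box containing $|\lambda|$ (which lies in some row $r$): the subcase $|\lambda| - 1 \in \Des(T)$ is a clean accounting since the lost descent value $|\lambda| - 1$ absorbs the increment $r - 1 = n(\lambda) - n(\lambda')$, while the subcase $|\lambda| - 1 \notin \Des(T)$ requires a mildly strengthened inductive hypothesis (for instance, one that also tracks the row of the largest entry of $T$) in order to cover the same gap without any loss.
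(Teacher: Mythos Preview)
Your proof is correct. The paper does not supply a proof of this statement at all; it is labeled an Observation and invoked without further comment, so there is no argument in the paper to compare against.

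That said, routing through Stanley's $q$-hook length formula is heavier machinery than the label ``Observation'' suggests the authors have in mind. (A small quibble: what you actually need from the right-hand factor is only that it is a power series with constant term $1$, which is immediate since both $[n]!_q$ and $\prod_c [h(c)]_q$ have constant term $1$; the nonnegativity of its coefficients is not required and would be circular if argued from the left-hand side.) A fully elementary alternative goes as follows. List the descents as $d_1 < \cdots < d_r$, which cut $[n]$ into blocks $I_0 = [1,d_1],\, I_1 = [d_1{+}1,d_2],\, \ldots,\, I_r = [d_r{+}1,n]$; within each block the row index is weakly decreasing. Hence $I_0$ lies entirely in row~$1$, and for $t \geq 1$ the entries of $I_t$ outside row~$1$ are exactly its first $a_t$ elements $d_t{+}1,\ldots,d_t{+}a_t$. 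These $a_t$ entries occupy distinct columns (two entries of a block in the same column would contradict weak decrease of the row index together with column-strictness). In each such column, the row-$1$ cell carries an entry that is smaller than the block entry below it and, being in row~$1$, does not lie in $\{d_t{+}1,\ldots,d_t{+}a_t\}$; hence it lies in $[1,d_t]$. This produces $a_t$ distinct integers in $[1,d_t]$, so $a_t \leq d_t$. Summing over $t$ yields
\[
|\mu| \;=\; \sum_{t=1}^{r} a_t \;\leq\; \sum_{t=1}^{r} d_t \;=\; \maj(T).
\]
This bypasses both Stanley's formula and the strengthened induction you outline at the end.
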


We proceed to give a combinatorial proof of Theorem~\ref{multiplicity-theorem}.

\begin{proof} (of Theorem~\ref{multiplicity-theorem})
(1) If $d$ is odd, the sequence $H_d(X_{n,k})$ of homology representations in question is the zero 
sequence and there is nothing to show, so assume $d = 2s$ is even.

By Lemma~\ref{homology-cohomology-transfer} and Theorem~\ref{graded-frobenius}, for $\lambda \vdash n$
the multiplicity of $S^{\lambda}$ in $H_d(X_{n,k})$ is the size of the set
\begin{equation}
\label{first-pair-set} A(\lambda,k) :=
\left\{ (T, \nu) \,:\, \begin{matrix}
\text{$T$ is a standard tableau of shape $\lambda$,} \\
\text{$\nu$ is a partition inside a $(k - \des(T) - 1) \times (n-k)$ rectangle,} \\
\text{and $\maj(T) + |\nu| = s$.}
\end{matrix} \right\}.
\end{equation}
We want to understand $|A(\lambda,k)|$ in the limit $n \rightarrow \infty$ with $k$ and $s$ fixed.  

If $\lambda = (\lambda_1, \lambda_2, \lambda_3, \dots )$ write $\mu = (\lambda_2, \lambda_3, \dots )$
so that $\lambda = \mu[n]$.  We have a map
\begin{equation}
\varphi: \{ T \in \SYT(\mu[n]) \,:\, \maj(T) \leq s \} \rightarrow \{U \in \SYT(\mu[n+1]) \,:\, \maj(U) \leq s \}
\end{equation}
where $\varphi(T)$ is obtained from $T$ by adding a box labeled $n+1$ to the first row.
Since adding $n+1$ to the first row does not introduce a descent, we have
$\des(\varphi(T)) = \des(T)$ and $\maj(\varphi(T)) = \maj(T)$ for any $T$ in the domain of $\varphi$.
If $n > 2s$, the map $\varphi$ is a bijection.

Suppose $0 \leq t \leq s$ and $T \in \SYT(\mu[n])$ satisfies $\maj(T) = t$.  We necessarily have $\des(T) \leq t$.
How many partitions $\nu \vdash (s-t)$ fit inside a $(k - \des(T) - 1) \times (n-k)$ rectangle as $n \rightarrow \infty$?
The answer is best understood with a picture: 
in the following diagram, the shaded partition $\nu$ has $s-t$ boxes
and the height $k - \des(T) - 1$ of the rectangle
 is fixed. However, the width $n-k$ of the rectangle is growing linearly with $n$.

\begin{center}
\begin{tikzpicture}

\draw (0,0) -- (10,0) -- (10,3) -- (0,3) -- (0,0);

\draw [<->] (0,3.5) -- (10,3.5);

\draw [<->] (-0.5,0) -- (-0.5,3);

\node at (-2,1.5) {$k - \des(T) - 1$};

\node at (5,4) {$n-k$};

\draw [fill=gray!20] (0,3) -- (6,3) -- (6,2) -- (5.5,2) -- (5.5,1.5) -- (4,1.5) -- (4,1) -- (3,1) -- (3,0) -- (0,0) -- (0,3);

\node at (2,1.5) {$|\nu| = s-t$};

\end{tikzpicture}
\end{center}
If we assume $n > s + k$, the right vertical boundary provides no obstruction to the 
Ferrers diagram of $\nu \vdash s-t$ and the number of such partitions $\nu$ is 
\begin{equation}
p(s-t, \leq k - \des(T) - 1) := 
\text{the number of partitions of $s-t$ with $\leq k - \des(T) - 1$ parts,}
\end{equation}
a quantity which is independent of $n$.
By the previous paragraph we have
\begin{equation}
|A(\mu[n],k)| = |A(\mu[n+1],k)| \quad \text{for all $n > \max(2s, s+k)$},
\end{equation}
and uniform multiplicity stability follows.

(2) Let $\mu$ be a partition. We show
$|A(\mu[n],n-m)| = |A(\mu[n+1],n-m+1)|$ for $n$ sufficiently large.
If $|\mu| > s$ Observation~\ref{major-index-observation} implies
$A(\mu[n],n-m) = A(\mu[n+1],n-m+1) = \varnothing$.
We therefore assume $|\mu| \leq s$.

Let $0 \leq t \leq s$ and let $T \in \SYT(\mu[n])$ satisfy $\maj(T) = t$, so that $\des(T) \leq t$.
How many partitions $\nu \vdash (s-t)$ fit inside a $(n-m-\des(T)-1) \times m$ rectangle?
If $n > s + m + \des(T) + 1$, reasoning analogous to the proof of (1)
(but with a rectangle of fixed width and growing height) shows that 
the number of such partitions $\nu$ is 
\begin{equation}
p(s-t, \leq m) :=
\text{the number of partitions of $s-t$ with all parts $\leq m$,} 
\end{equation}
a quantity independent of $n$.  Applying the bijection $\varphi$, we conclude that
\begin{equation}
|A(\mu[n],n-m)| = |A(\mu[n+1],n-m+1)| \quad \text{for all $n > \max(2s+m+1, |\mu| + \mu_1)$}.
\end{equation}
Since $\mu_1 \leq |\mu| \leq s$, uniform multiplicity stability is proven.
\end{proof}

\subsection{Algebra}
We give an algebraic proof of Theorem~\ref{stability-theorem} using $\coFI$-modules.
We assume the Medium Black Box Theorem~\ref{cohomology-presentation} which presents 
the cohomology ring $H^{\bullet}(X_{n,k}) = R_{n,k}$ but {\bf not} the Large Black Box
Theorem~\ref{graded-frobenius} which describes the graded $\symm_n$-isomorphism 
type of $R_{n,k}$.

Let $\FI$ be the category whose objects are the finite sets 
$[n] := \{1, 2, \dots, n\}$ for $n \geq 0 $ (where we take $[0] := \varnothing$)
and whose morphisms are injective functions $f: [n] \rightarrow [m]$.
Let $\Vect$ be the category of $\QQ$-vector spaces with morphisms given by linear maps.

An {\em $\FI$-module} is a covariant functor $V: \FI \rightarrow \Vect$.  
We write $V(n)$ instead of $V([n])$ for the vector space corresponding to $[n]$.
More explicitly, an 
$\FI$-module consists of 
\begin{itemize}
\item a $\QQ$-vector space $V(n)$ for each $n \geq 0$, and
\item a $\QQ$-linear map $V(f): V(n) \rightarrow V(m)$ for each injection $f: [n] \rightarrow [m]$
\end{itemize}
such that $V(f \circ g) = V(f) \circ V(g)$ for any two injections $f: [n] \rightarrow [m]$ and $g: [m] \rightarrow [r]$
and $V( \mathrm{id}_{[n]}) = \mathrm{id}_{V(n)}$.
Submodules and quotients of $\FI$-modules are defined in the natural way.
Observe that if $V$ is an $\FI$-module, then $V(n)$ is naturally an $\symm_n$-module for each $n$.

The crucial example of an $\FI$-module is the assignment $V(n) := \QQ[x_1, \dots, x_n]$ 
which sends $[n]$ to the polynomial ring in $x_1, \dots, x_n$.
Given an injection $f: [n] \rightarrow [m]$,
the functor $V$ associates the linear map $V(f): \QQ[x_1, \dots, x_n] \rightarrow \QQ[x_1, \dots, x_m]$
induced by $x_i \mapsto x_{f(i)}$.
For any fixed $d \geq 0$, the $\FI$-module $V$ has a submodule $V_d: [n] \rightarrow \QQ[x_1, \dots, x_n]_d$
which sends $n$ to the space of polynomials in $x_1, \dots, x_n$ which are homogeneous of degree $d$.

An $\FI$-module $V$ is {\em finitely generated} if there is a finite subset $S$ of
the disjoint union $\bigsqcup_{n \geq 0} V(n)$
such that no proper $\FI$-submodule $W$ of $V$ satisfies $S \subseteq \bigsqcup_{n \geq 0} W(n)$.
Although the $\FI$-module $V$ of the above paragraph is not finitely generated, its $\FI$-submodules
$V_d$ are finitely generated for every fixed $d \geq 0$.
In fact, the $\FI$-module $V_d$ is generated by the set of monomials
\begin{equation}
S = \{ x_1^{\lambda_1} \cdots x_d^{\lambda_d} \,:\, \lambda \vdash d \} 
\end{equation}
in $x_1, \dots, x_d$ whose exponent sequences are partitions 
$\lambda = (\lambda_1 \geq \cdots \geq \lambda_d \geq 0)$ of $d$.
If $\lambda$ has $k \leq d$ positive parts, we consider the monomial $x_1^{\lambda_1} \cdots x_k^{\lambda_k}$
to lie in the space $V(k)$.

Importantly, finite generation of $\FI$-modules and representation stability are essentially equivalent.

\begin{theorem}[\cite{CEF}]\label{FIRepStable}
Let $\iota_n:[n] \hookrightarrow [n+1]$ denote the standard injection, and let $V$ be a finitely generated $\FI$-module. Then the sequence $(V(n))_{n \geq 1}$ is representation stable with respect to the maps $V(\iota_n)$. Conversely, every representation stable sequence of representations arises in this way.
\end{theorem}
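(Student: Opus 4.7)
The plan is to handle the two implications separately. For the forward direction, we take $f_n := V(\iota_n)$ and verify the four conditions of Definition \ref{representation-stability-definition}. Three of these are essentially formal consequences of the FI-structure, while injectivity is the main obstacle and demands the Noetherian theory for finitely generated FI-modules. The converse direction proceeds by reconstructing an FI-module from the data of the sequence $(V_n)$ and the maps $(f_n)$.

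The easy conditions go as follows. Equivariance (condition 2) comes directly from functoriality: the identity $\iota_n \circ \sigma = \widetilde\sigma \circ \iota_n$ in $\FI$ (where $\widetilde\sigma \in \symm_{n+1}$ extends $\sigma \in \symm_n$ by fixing $n+1$) is sent by $V$ to the statement that $V(\iota_n)$ intertwines the $\symm_n$-actions. Triviality of the transposition (condition 4) is likewise formal: the two morphisms $\iota_{n+1} \circ \iota_n$ and $(n+1,n+2) \circ \iota_{n+1} \circ \iota_n$ agree as injections $[n] \to [n+2]$, so applying $V$ yields the claim. For generation (condition 3), one picks a finite generating set $S$ of $V$ contained in $\bigsqcup_{m \leq d} V(m)$. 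For $n \geq d$ and any $y \in V(n+1)$, write $y$ as a sum of terms $V(g)(x)$ with $x \in S \cap V(m)$ and $m \leq d$. Since $g:[m] \to [n+1]$ misses at least one point, some $\sigma \in \symm_{n+1}$ satisfies $\sigma \circ g = \iota_n \circ g'$ for an injection $g':[m] \to [n]$, so that $V(g)(x) = \sigma^{-1} \cdot V(\iota_n)(V(g')(x))$ lies in the $\symm_{n+1}$-span of $\im f_n$.

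The hard step is injectivity (condition 1). This is not a formal consequence of the FI-module structure, and the plan is to invoke Noetherianity: every finitely generated FI-module over $\QQ$ is locally Noetherian. One observes that the kernels $K(n) := \ker f_n$ assemble (via a subfunctor of a suitable shift of $V$) into an FI-submodule of $V$, and are therefore themselves finitely generated. Combined with the classification of irreducible constituents in terms of padded partitions $\mu[n]$, and the fact that an FI-module generated in bounded degree whose structure maps vanish in all sufficiently large degrees must vanish in those degrees, one concludes that $K(n) = 0$ for $n \gg 0$. This is the genuinely deep input and forms the heart of the CEF theory.

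For the converse, given a representation stable sequence $(V_n, f_n)$ that stabilizes for $n \geq N$, we define an FI-module $V$ by $V([n]) := V_n$ and, for an injection $g:[m] \to [n]$ with $m, n \geq N$, set $V(g)(v) := \sigma_g \cdot (f_{n-1} \circ \cdots \circ f_m)(v)$, where $\sigma_g \in \symm_n$ is any permutation with $g = \sigma_g \circ \iota^{(m,n)}$ and $\iota^{(m,n)}$ denotes the composition of standard inclusions. Well-definedness reduces to showing that the stabilizer subgroup $\symm_{\{m+1,\ldots,n\}}$ acts trivially on $\im(f_{n-1} \circ \cdots \circ f_m)$. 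By induction on $n-m$, the transpositions $(k,k+1)$ with $k < n-1$ are handled by pushing equivariance down through $f_{n-1}$ to $\im(f_{n-2} \circ \cdots \circ f_m) \subseteq V_{n-1}$, while the transposition $(n-1,n)$ is precisely the content of condition 4 applied to the last two structure maps. Functoriality of $V$ is then a direct verification, and finite generation of $V$ follows from the fact that the $\symm_{n+1}$-orbit of $\im f_n$ spans $V_{n+1}$ together with uniform multiplicity stability.
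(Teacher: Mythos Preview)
The paper does not supply its own proof of this theorem; it is quoted as a result of Church--Ellenberg--Farb and used as a black box, so there is no in-paper argument to compare your proposal against.

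That said, your sketch is broadly in line with the CEF approach, with two soft spots worth tightening. In the injectivity step, the kernels $K(n) = \ker V(\iota_n)$ already form an $\FI$-submodule of $V$ directly---no shift is needed: if $v \in K(n)$ then any injection $g:[n]\to[m]$ with $m>n$ factors as $\sigma \circ \iota^{(n,m)}$ for some $\sigma \in \symm_m$, so $V(g)(v)=0$, and closure under $\symm_n$ is equivariance. Once $K$ is finitely generated by Noetherianity, note that every transition map of $K$ out of a given degree is zero; hence $K(n)=0$ for $n$ beyond the generation degree. Your appeal to the classification of irreducibles via padded partitions is an unnecessary detour here. For the converse, you only define $V(g)$ for $m,n \geq N$; you should say explicitly that you truncate (e.g.\ set $V([n])=0$ for $n<N$), since the equivariance of $f_n$ in Definition~\ref{representation-stability-definition} is only assumed for $n \gg 0$ and without it the construction cannot be extended downward. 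Finite generation then follows already from the surjectivity condition and finite-dimensionality of $V_N$; multiplicity stability is not needed for that step. With these clarifications the argument is sound.
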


Theorem \ref{FIRepStable} is perhaps most useful when one can prove finite generation of a given $\FI$-module using the following "Noetherian Property."

\begin{theorem}
\label{fi-is-noetherian}  (\cite{Snowden}; see also \cite{CEF})
Any submodule of a quotient of a finitely generated $\FI$-module is finitely generated.
\end{theorem}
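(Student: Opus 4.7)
The plan is to prove the Noetherian property by reducing to sub-$\FI$-modules of free $\FI$-modules and invoking a well-quasi-ordering argument in the spirit of Sam-Snowden and Church-Ellenberg-Farb.

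First I would handle the formal reductions. A quotient of a finitely generated $\FI$-module is finitely generated, because the images of the generators generate the quotient; so the statement reduces to showing that every sub-$\FI$-module of a finitely generated $\FI$-module $V$ is finitely generated. By the universal property of the free $\FI$-module $M(k)$ defined by $M(k)([n]) := \QQ[\mathrm{Inj}([k],[n])]$, a generating set of size $r$ for $V$ gives a surjection $\bigoplus_{i=1}^r M(k_i) \twoheadrightarrow V$, and any sub-$\FI$-module of $V$ is the image of its preimage in this direct sum. A standard induction on $r$ then reduces the problem to showing that each free $\FI$-module $M(k)$ is Noetherian.

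Next I would attack Noetherianity of $M(k)$ via well-quasi-ordering. The natural $\QQ$-basis of $M(k)([n])$ is indexed by injections $f : [k] \hookrightarrow [n]$, equivalently by length-$k$ words $(f(1), \ldots, f(k))$ in the alphabet $\NN$ with distinct entries. Put the partial order on $\mathcal{B}_k := \bigsqcup_n \mathrm{Inj}([k],[n])$ in which $f \leq g$ whenever there exists an $\FI$-morphism $\alpha$ with $g = \alpha \circ f$, i.e.\ $g$ is an $\FI$-extension of $f$. A suitable form of Higman's lemma on well-quasi-orderings of words (with the alphabet $\NN$ well-ordered in its usual way) shows that $\mathcal{B}_k$ is well-quasi-ordered under this relation. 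Given a sub-$\FI$-module $W \subseteq M(k)$, fix a $\QQ$-linear order on $\mathcal{B}_k$ refining this partial order and take leading terms of elements of $W$. The set of leading monomials of nonzero elements of $W$ is upward-closed with respect to $\leq$ (since $\FI$-morphisms act on $W$ and transform leading terms into leading terms), hence by well-quasi-ordering it has finitely many $\leq$-minimal elements. Choosing one element of $W$ for each minimal leading monomial yields a finite generating set, proving the theorem.

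The main obstacle I anticipate is the well-quasi-ordering step: one must choose a monomial order on $\mathcal{B}_k$ that interacts correctly with the $\FI$-action, so that leading terms genuinely transform under $\FI$-morphisms as in a Gr\"obner framework. In particular, one must verify that $\FI$-morphisms send basis monomials to basis monomials without cancellation (true here because injections compose to injections), and that the upward closure property for leading monomials really holds. Once this is set up, the finiteness of minimal leading monomials is purely combinatorial and follows from Higman's lemma applied to $k$-letter words with distinct entries in $\NN$; the passage back to finite generation of $W$ is then a routine Gr\"obner-basis style induction on the well-quasi-order.
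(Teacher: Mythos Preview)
First, a framing remark: the paper does not prove this theorem at all. It is quoted as a known result from \cite{Snowden} and \cite{CEF} and used as a black box; there is no ``paper's own proof'' to compare against. So your proposal is not competing with anything in the text, it is an attempt to supply what the paper deliberately outsources.

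Your formal reductions in the first paragraph are fine. The gap is in the Gr\"obner step, and it is exactly the one you flag but then wave past. The category $\FI$ contains all permutations, and more generally injections that are not order-preserving, so no total order on $\mathcal{B}_k$ can be compatible with the $\FI$-action in the sense you need. Concretely, take $k=1$, so $M(1)([n])$ has basis $\{1,\dots,n\}$, order them $1<2<\cdots$, and let $W$ be the sub-$\FI$-module generated by $2-1 \in M(1)([2])$. The leading monomial of $2-1$ is $2 \in M(1)([2])$. Now $1 \in M(1)([3])$ lies above $2 \in M(1)([2])$ in your $\FI$-partial order (use the injection $[2]\to[3]$ sending $2\mapsto 1$, $1\mapsto 2$), but $W([3])$ is the zero-sum subspace of $\QQ^3$, so $1$ is never a leading monomial. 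Upward closure fails, and the argument collapses.

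The standard repair (this is the Sam--Snowden mechanism underlying \cite{Snowden}) is to replace $\FI$ by the category $\mathsf{OI}$ of order-preserving injections. Over $\mathsf{OI}$ there is a genuine compatible monomial order, Higman's lemma gives the well-quasi-ordering, and your Gr\"obner argument goes through to show $\mathsf{OI}$-modules are Noetherian. One then transfers Noetherianity to $\FI$ using that the forgetful functor $\mathsf{OI}\to\FI$ is essentially surjective and has the property that the pullback of a finitely generated $\FI$-module is a finitely generated $\mathsf{OI}$-module. Your sketch becomes correct once this detour is inserted; as written, the direct Gr\"obner attack on $\FI$ cannot work.
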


We will see that often times one can embed a complicated $\FI$-module into one which is significantly simpler and clearly finitely generated. In this circumstance, the above foundational theorems allow us to conclude uniform multiplicity stability of the original sequence almost for free. This will be our strategy going forward.

Let $V$ be the $\FI$-module $V(n) = \QQ[x_1, \dots, x_n]$ considered above. 
For fixed $k \geq 1$, the assignment 
\begin{equation}
[n] \mapsto \begin{cases}
I_{n,k} & \text{if $n \geq k$,} \\
\QQ[x_1, \dots, x_n] & \text{if $n < k$.} 
\end{cases}
\end{equation}
does {\em not} define an $\FI$-submodule of $V$.  Indeed, consider the inclusion
$i: [n] \hookrightarrow [n+1]$ for some $n \geq k$.  We have 
$e_n(x_1, \dots, x_n) = x_1 x_2 \cdots x_n \in I_{n,k}$ but
$V(i) : x_1 x_2 \dots x_n \mapsto x_1 x_2 \cdots x_n \notin I_{n+1,k}$.

To fit the
 ideals $I_{n,k}$ and rings $R_{n,k}$
  into the $\FI$ framework, we need the dual notion of a {\em $\coFI$-module}.
Let $\coFI$ be the {\em opposite category} to $\FI$ obtained by arrow reversal.
A {\em $\coFI$-module} is a covariant functor $\coFI \rightarrow \Vect$.  
If $V$ is an $\FI$-module, its dual $V^*$ is naturally a $\coFI$-module.
We apply $\coFI$-modules to prove Theorem~\ref{stability-theorem} as follows.

\begin{proof} (of Theorem~\ref{stability-theorem})
If $d$ is odd we have $H_d(X_{n,k}) = 0$ for all $k \leq n$ and there is nothing to prove, so assume $d = 2s$
is even for the remainder of the proof.

We are assuming the Medium Black Box Theorem~\ref{cohomology-presentation} which gives a presentation
\begin{equation}
H^{\bullet}(X_{n,k}) = \QQ[x_1, \dots, x_n]/I_{n,k}.
\end{equation}
By Lemma~\ref{homology-cohomology-transfer} we have an identification of vector spaces
\begin{equation}
H_d(X_{n,k}) = \QQ[x_1, \dots, x_n]_s/(I_{n,k} \cap \QQ[x_1, \dots, x_n]_s)
\end{equation}
for any $k \leq n$. 
If $k > n$ we have $H_d(X_{n,k}) = H_d(\varnothing) = 0$.

We have a $\coFI$-module $W_s(n) := \QQ[x_1, \dots, x_n]_s$ which assigns to an injection
$f: [n] \rightarrow [n']$ the linear map $W_s(f): W_s(n') \rightarrow W_s(n)$ induced by
\begin{equation*}
x_j \mapsto 
\begin{cases}
x_{i} & \text{if $j = f(i)$}  \\
0 & \text{if $j \notin \mathrm{Image}(f)$}
\end{cases}
\end{equation*}
for $1 \leq j \leq n'$.
Said differently, the $\coFI$-module $W_s$ is the dual of the $\FI$-module $V_s$ defined above.

Fix $k$ and $m$ as in the statement of Theorem~\ref{stability-theorem}.
We define two submodules $P_s$ and $Q_s$ of the $\coFI$-module $W_s$ as follows.
\begin{equation}
P_s(n) := \begin{cases}
I_{n,k} \cap W_s(n) & n \geq k \\
W_s(n) & n < k
\end{cases}  \quad \quad
Q_s(n) := \begin{cases}
I_{n,n-m} \cap W_s(n) & n \geq m \\
W_s(n) & n < m
\end{cases}
\end{equation}
The fact that $P_s$ and $Q_s$ are in fact $\coFI$-submodules of $W_s$ amounts to checking that 
for any injection $f: [n] \rightarrow [n']$ we have the containments 
\begin{center}
$W_s(f)(P_s(n')) \subseteq P_s(n)$
and $W_s(f)(Q_s(n')) \subseteq Q_s(n)$.
\end{center}
Given the generating set of $I_{n,k}$, this reduces to the observation that 
\begin{center}
$W_s(f)(e_d(x_1, \dots, x_{n'})) = e_d(x_1, \dots, x_n)$ for any $1 \leq d \leq n'$ and any
injection $f: [n] \rightarrow [n']$
\end{center}
where we interpret $e_d(x_1, \dots, x_n) = 0$ if $n < d$.
Theorem~\ref{stability-theorem} follows from Theorem~\ref{fi-is-noetherian}
and the work of Church, Ellenberg, and Farb \cite[Thm. 1.13]{CEF}.
\end{proof}

\subsection{Geometry}
In this subsection we give another proof of Theorem~\ref{stability-theorem} which assumes 
only the Small Black Box Theorem~\ref{affine-paving} that $X_{n,k}$ is a 
union of open cells in
a (nonstandard) affine 
paving of $(\PP^{k-1})^n$.
This does not rely on the presentation $H^{\bullet}(X_{n,k}) = R_{n,k}$ (or on the description of the graded 
$\symm_n$-structure of $R_{n,k}$).
Like the previous proof,
we use the category $\FI$.  However,
 this proof is geometric in nature and works directly
with the spaces $X_{n,k}$.

\begin{proof} (of Theorem~\ref{stability-theorem})
We begin with statement (1).
Let $f: [n] \rightarrow [p]$ be an injective map.  We define a map
$\iota_f: (\PP^{k-1})^n \hookrightarrow (\PP^{k-1})^p$
as follows.  Given an $n$-tuple
$(\ell_1, \dots, \ell_n) \in (\PP^{k-1})^n$ we set $\iota_f(\ell_1, \dots, \ell_n) := (\ell'_1, \dots, \ell'_p)$ where
\begin{equation}
\ell'_j = \begin{cases}
\ell_i & \text{if $f(i) = j$} \\
\one_k & \text{if $j \notin \mathrm{Image}(f)$.}
\end{cases}
\end{equation}
(Recall that $\one_k \in \PP^{k-1}$ is the line of constant vectors in $\CC^k$.)
The map $\iota_f$ preserves the spanning property of line configurations
in the sense that 
$\iota_f(X_{n,k}) \subseteq X_{p,k}$.
Abusing notation, we write $\iota_f$ for the restricted map $X_{n,k} \rightarrow X_{p,k}$.

For any fixed degree $d$, we have two $\FI$-module structures coming from the homology groups 
$H_d((\PP^{k-1})^n)$ and the homology groups $H_d(X_{n,k})$.  If $f: [n] \rightarrow [p]$ is any injection
we have a commutative square

\begin{center}
\begin{tikzpicture}[every node/.style={midway}]
\matrix[column sep={10em,between origins},
        row sep={4em}] at (0,0)
{ \node(A)   {$H_d(X_{n,k})$}  ; & \node(B) {$H_d((\PP^{k-1})^n)$}; \\
  \node(C) {$H_d(X_{p,k})$}; &\node(D){$H_d((\PP^{k-1})^p)$};                  \\};
\draw[->] (B) -- (D) node[anchor=west]  {$(\iota_f)_d$};
\draw[->] (A) -- (C) node[anchor=east]  {$(\iota_f)_d$};
\draw[->] (A)   -- (B) node[anchor=south] {};
\draw[->] (C)   -- (D) node[anchor=south] {};
\end{tikzpicture}
\end{center}
Here $(\iota_f)_d$ is the induced map from $\iota_f$
on  $d^{th}$ homology groups and the horizontal maps are induced by the inclusions
$X_{n,k} \subseteq (\PP^{k-1})^n$ and $X_{p,k} \subseteq (\PP^{k-1})^p$.
By our Small Black Box Theorem~\ref{affine-paving}, the horizontal maps are injections, so that 
$H_d(X_{n,k})$ is a submodule of $H_d((\PP^{k-1})^n)$.  Since $H_d((\PP^{k-1})^n)$ is finitely generated,
Theorem~\ref{fi-is-noetherian} implies that $H_d(X_{n,k})$ is also finitely generated.
The work of Chuch, Ellenberg, and Farb \cite[Thm. 1.13]{CEF} finishes the proof 
of Theorem~\ref{stability-theorem} (1).

We turn to the proof of Theorem~\ref{stability-theorem} (2). This follows the same basic strategy as in
Theorem~\ref{stability-theorem} (1) but there are additional complications involving the gap between
maps on spaces and their induced maps on homology.

For any injection $f: [n] \rightarrow [p]$, 
write $[p] - \mathrm{Image}(f) := \{ i_1 < i_2 < \cdots < i_{p-n} \}$ for the elements 
of $[p]$ which do not lie in the image of $f$.
Define 
$\nu_f: (\PP^{n-m-1})^n \rightarrow (\PP^{p-m-1})^p$ by 
$\nu_f(\ell_1, \dots, \ell_n) = (\ell''_1, \dots, \ell''_p)$ where
\begin{equation}
\ell''_j = \begin{cases}
\ell_i & \text{if $f(i) = j$} \\
\mathrm{span} \{ e_{t + n - m} \} & \text{if $j = i_t \notin \mathrm{Image}(f)$.}
\end{cases}
\end{equation}
Here we view $\ell_i$ as a line in $\CC^{p-m}$ via the embedding $\CC^{n-m} \subseteq \CC^{p-m}$ into the
first $p-m$ coordinates and
$\mathrm{span} \{ e_{t + n - m} \}$ is the line spanned by the $(t+n-m)^{th}$ standard coordinate vector in
$\CC^{p-m}$.
For example, if $f: [3] \rightarrow [5]$ is the injection $f(1) = 4, f(2) = 1, f(3) = 3$ then for $n-m = 2$ we have
\begin{equation*} \nu_f:
\begin{pmatrix}
3 & 1 & 2 \\
-1 & 1 & 1 
\end{pmatrix}  \mapsto
\begin{pmatrix}
1 & 0 & 2 & 3 & 0 \\
1 & 0 & 1 & -1 & 0 \\
0 & 1 & 0 & 0 & 0 \\
0 & 0 & 0 & 0 & 1
\end{pmatrix}
\end{equation*}
where we view the lines $\ell_1, \ell_2, \ell_3 \in \PP^1$
and $\ell''_1, \ell''_2, \ell''_3, \ell''_4, \ell''_5 \in \PP^4$ as being spanned by columns of matrices.

If $f: [n] \rightarrow [p]$ and $g: [p] \rightarrow [r]$ are injections, we {\bf do not} in general have the equality of 
maps $\nu_{g \circ f} = \nu_g \circ \nu_f$. 
For example, if $f: [3] \rightarrow [5]$ is as above and $g: [5] \rightarrow [6]$ is the injection
$g(1) = 3, g(2) = 4, g(3) = 2, g(4) = 6, g(5) = 1$ then
\begin{equation*}\nu_g \circ \nu_f:
\begin{pmatrix}
3 & 1 & 2 \\
-1 & 1 & 1 
\end{pmatrix}  \mapsto
\begin{pmatrix}
0 & 2 & 1 & 0 & 0 & 3 \\
0 & 1 & 1 & 0 & 0 & -1 \\
0 & 0 & 0 & 1 & 0 & 0 \\
1 & 0 & 0 & 0 & 0 & 0 \\
0 & 0 & 0 & 0 & 1 & 0
\end{pmatrix}
\end{equation*}
but
\begin{equation*}\nu_{g \circ f}:
\begin{pmatrix}
3 & 1 & 2 \\
-1 & 1 & 1 
\end{pmatrix}  \mapsto
\begin{pmatrix}
0 & 2 & 1 & 0 & 0 & 3 \\
0 & 1 & 1 & 0 & 0 & -1 \\
1 & 0 & 0 & 0 & 0 & 0 \\
0 & 0 & 0 & 1 & 0 & 0 \\
0 & 0 & 0 & 0 & 1 & 0
\end{pmatrix}.
\end{equation*}

In spite of this, we have

\noindent
{\bf Claim:}
{\em The assignment $[n] \mapsto H_d((\PP^{n-m-1})^n)$ is a finitely-generated $\FI$-module where an
injection $f: [n] \rightarrow [p]$ is sent to the induced homology map
$(\nu_f)_d: H_d((\PP^{n-m-1})^n) \rightarrow H_d(\PP^{p-m-1})^p$.}

To prove the Claim, consider injections $[n] \xrightarrow{f} [p] \xrightarrow{g} [r]$ as above.
We need to show the equality of maps $(\nu_{g \circ f})_d = (\nu_g)_d \circ (\nu_f)_d$ on the homology group
$H_d((\PP^{n-m-1})^n)$. 
To do this, we will establish the homotopy equivalence of the maps $\nu_{g \circ f}$ and $\nu_g \circ \nu_f$:
\begin{equation}
\label{target-homotopy-equivalence}
\nu_{g \circ f} \simeq \nu_g \circ \nu_f.
\end{equation}

The general linear group $GL_{r-m}(\CC)$ acts  on the projective space product 
 $(\PP^{r-m-1})^r$ by the rule $A \cdot (\ell'_1, \dots, \ell'_r) := (A \cdot \ell'_1, \dots, A \cdot \ell'_r)$
 for any $A \in GL_{r-m}(\CC)$.
 In terms of the matrices shown above, this corresponds to an action on rows.
 
 If we consider line tuples as matrices,
 it follows from the definition of $\nu$ that the image of any matrix under
  $\nu_{g \circ f}$ and $\nu_g \circ \nu_f$ coincides
 apart from some permutation of the bottom $r-n$ rows which only depends on $f$ and $g$.
 Rephrasing,
 there exists some $B \in GL_{r-m}(\CC)$ so that 
 $B \cdot \nu_{g \circ f} = \nu_g \circ \nu_f$ as functions on $(\PP^{n-m-1})^n$.
Since $GL_{r-m}(\CC)$ is path connected, we may choose some path from the identity to $B$
to induce the desired homotopy equivalence \eqref{target-homotopy-equivalence}.

We have shown that the assignment $[n] \mapsto H_d((\PP^{n-m-1})^n)$ is an $\FI$-module 
with respect to the maps $\nu_f$.  The well-known presentation
$H^{\bullet}((\PP^{n-m-1})^n) = \QQ[x_1, \dots, x_n]/\langle x_1^{n-m}, \dots, x_n^{n-m} \rangle$
where $x_i \leftrightarrow c_1(\ell_i^*)$ proves that this $\FI$-module is finitely generated.
This completes the proof of the Claim.

Observe that if $f: [n] \rightarrow [p]$ is an injection, we have $\nu_f(X_{n,n-m}) \subseteq X_{p,p-m}$.
In terms of matrices, this is the statement that $\nu_f$ takes full rank matrices to full rank matrices.
The
assignment $[n] \mapsto H_d(X_{n,n-m})$ is therefore a submodule of the finitely generated $\FI$-module
$[n] \mapsto H_d((\PP^{n-m-1})^n)$.
By Theorem~\ref{fi-is-noetherian}, the $\FI$-module 
$[n] \mapsto H^{\bullet}(X_{n,n-m})$ is itself finitely generated.
Another invocation of \cite[Thm. 1.13]{CEF} finishes the proof of Theorem~\ref{stability-theorem} (2).
\end{proof}

\section{Other Stability Results}
\label{Other}

\subsection{Higher dimensional spanning configurations}
The space $X_{n,k}$ of spanning line configurations can be generalized to higher dimensional subspaces.
Given a dimension $1 \leq d \leq k$ and $n \geq 1$ we have the product of 
Grassmannians
\begin{equation}
Gr(d,n,k) := Gr(d,k) \times \cdots \times Gr(d,k),
\end{equation}
where $Gr(d,k)$ is the Grassmannian of $d$-dimensional subspaces of $\CC^k$.
Thus $Gr(d,n,k)$ is the variety of all size $n$ configurations $(W_1, \dots, W_n)$ of $d$-planes in $\CC^k$.
Let $X_{d,n,k} \subseteq Gr(d,n,k)$ be the locus
\begin{equation}
X_{d,n,k} := \{ (W_1, \dots, W_n) \in Gr(d,n,k) \,:\, W_1 + \cdots + W_n = \CC^k \}
\end{equation}
of spanning $d$-plane configurations.
A point in $X_{2,3,3}$ is shown on the right of Figure~\ref{three-points}.

The Grassmann product $Gr(d,n,k)$ and
its subspace $X_{d,n,k}$ are closed under the $\symm_n$-action
\begin{equation*}
v.(W_1, \dots, W_n) := (W_{v(1)}, \dots, W_{v(n)}), \quad \quad v \in \symm_n.
\end{equation*}
The homology groups of $Gr(d,n,k)$ and $X_{d,n,k}$ inherit an action of $\symm_n$.

\begin{theorem}
\label{higher-dimensional-stability}
Let $p \geq 0$ be a fixed homological degree. 
\begin{enumerate}
\item  Fix $k \geq 1$.
The sequence of homology representations
$(H_p(X_{d,n,k}))_{n \geq 0}$
is uniformly representation stable.
\item  Fix $m \geq 0$.  
The sequence of homology representations
$(H_p(X_{d,n,dn-m}))_{n \geq 0}$
is uniformly representation stable. 
\end{enumerate}
\end{theorem}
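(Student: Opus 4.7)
\textbf{The plan} is to extend the algebraic $\coFI$-module argument from the proof of Theorem~\ref{stability-theorem} to the higher-dimensional setting, taking as input the cohomology presentation from \cite{RhoadesSpanning}, which describes $H^\bullet(X_{d,n,k})$ as a polynomial ring quotient $\QQ[x_{i,r} : 1 \le i \le n, 1 \le r \le d]/J_{d,n,k}$, with $x_{i,r}$ representing the Chern class $c_r(W_i^*) \in H^{2r}(X_{d,n,k})$ and $J_{d,n,k}$ an explicit homogeneous $\symm_n$-stable ideal. The generators of $J_{d,n,k}$ split, as in the line case, into ``local'' generators (generalizing the $x_i^k$), involving only variables attached to a single subspace index, and ``global'' generators (generalizing $e_n, \ldots, e_{n-k+1}$), which are symmetric combinations of the full variable set. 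Via Lemma~\ref{homology-cohomology-transfer}, it suffices to exhibit the homogeneous degree $p$ piece of this quotient as a finitely generated $\coFI$-module in $n$.

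Concretely, I would define an ambient $\coFI$-module $W_p$ by letting $W_p(n)$ be the degree $p$ piece of $\QQ[x_{i,r} : i \in [n],\, 1 \le r \le d]$ with $\deg x_{i,r} = 2r$, and letting an injection $f : [n] \hookrightarrow [n']$ act by $x_{j,r} \mapsto x_{i,r}$ when $j = f(i)$ and $x_{j,r} \mapsto 0$ when $j \notin \mathrm{Image}(f)$. Since any degree $p$ monomial involves at most $\lceil p/2 \rceil$ first indices, $W_p$ is manifestly finitely generated. For part (1), I would set $P_p(n) := (J_{d,n,k})_p$ whenever $dn \ge k$ and $P_p(n) := W_p(n)$ otherwise; for part (2), I would define $Q_p(n)$ analogously using $J_{d,n,dn-m}$. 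The core task is to verify that $P_p$ and $Q_p$ are $\coFI$-submodules of $W_p$: local generators pull back to local generators (or to zero, when their subspace index is not in the image of $f$), and global generators pull back coherently via the same kind of identity $W_p(f)\bigl(e_j(\ldots)\bigr) = e_j(\ldots)$ used in the proof of Theorem~\ref{stability-theorem}. For part (2) there is the additional simplification that local generators whose intrinsic degree exceeds $p$ play no role in $W_p(n)$ for $n$ sufficiently large, so the ``unstable'' portion of the ideal disappears at degree $p$. Having realized $P_p, Q_p \subseteq W_p$ as $\coFI$-submodules, Theorem~\ref{fi-is-noetherian} gives finite generation of $W_p/P_p$ and $W_p/Q_p$; dualizing via Lemma~\ref{homology-cohomology-transfer} produces finitely generated $\FI$-modules $n \mapsto H_p(X_{d,n,k})$ and $n \mapsto H_p(X_{d,n,dn-m})$, and Theorem~\ref{FIRepStable} together with \cite[Thm.~1.13]{CEF} delivers uniform representation stability.

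\textbf{The main obstacle} is verifying the $\coFI$-compatibility of the generator families of $J_{d,n,k}$, which are more intricate than the elementary symmetric polynomials appearing in the line case. One must carefully identify which subfamily of generators truly stabilizes and pulls back to itself under the substitution $x_{j,r} \mapsto x_{f^{-1}(j),r}$ versus which subfamily has intrinsic degree growing with $n$ and therefore vanishes from any fixed $W_p$ once $n$ is large. Part (2) is the more delicate of the two, because the bound $k = dn - m$ causes the ``local'' power-type generators to drift in degree with $n$; this mirrors the asymmetry already seen between parts (1) and (2) of Theorem~\ref{stability-theorem}, but with a richer ideal structure that requires a genuinely higher-rank bookkeeping.
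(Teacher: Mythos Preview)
Your proposal is correct and follows essentially the same $\coFI$-module strategy as the paper. The only cosmetic difference is that the paper works with the Chern-root presentation
\[
H^{\bullet}(X_{d,n,k}) = \bigl(\QQ[x_1,\dots,x_{dn}]/I\bigr)^{\symm_d \times \cdots \times \symm_d},
\]
grouping the $dn$ variables into $n$ batches of size $d$ and letting $\coFI$ act by setting whole batches to zero, whereas you phrase things in terms of Chern classes $c_r(W_i^*)$; these are equivalent since the ring generated by the Chern classes is exactly the $\symm_d^{\,n}$-invariant subring. Your identification of the ``local'' versus ``global'' generator families, and of which family has degree drifting with $n$ in each of parts (1) and (2), matches what the paper uses (the paper simply declares the $\coFI$-submodule check ``straightforward'' where you spell out the bookkeeping).
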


\begin{proof}
This is very similar to our first proof of Theorem~\ref{stability-theorem}, so we only sketch the argument.
Let $N := dn$ and introduce $N$ variables $\xx_N = (x_1, x_2, \dots, x_N)$.  We break these variables
up into $n$ {\em batches} $\xx_N^{(1)} = (x_1, x_2, \dots, x_d), \dots, \xx_N^{(n)} = (x_{N-d+1}, \dots, x_{N-1}, x_N)$
of $d$ variables each.

Combining work of Rhoades \cite{RhoadesSpanning} and the Universal Coefficient Theorem, we have the following 
presentation of the cohomology ring of $X_{d,n,k}$.
\begin{equation}
H^{\bullet}(X_{d,n,k}) = (\QQ[x_1, x_2, \dots, x_N]/I)^{\symm_d \times \cdots \times \symm_d},
\end{equation}
where 
\begin{itemize}
\item the variables in the $i^{th}$ batch $\xx_N^{(i)}$ represent the Chern roots of the rank $d$ vector 
bundle $W_i^* \twoheadrightarrow X_{d,k,n}$,
\item the ideal $I$ is generated by the top $k$ elementary symmetric polynomials 
\begin{equation*}
e_N(\xx_N), e_{N-1}(\xx_N), \dots, e_{N-k+1}(\xx_N)
\end{equation*}
in the full variable set $\xx_N$ together with the $d$ complete homogeneous symmetric polynomials
\begin{equation*}
h_k(\xx_N^{(i)}), h_{k-1}(\xx_N^{(i)}), \dots, h_{k-d+1}(\xx_N^{(i)})
\end{equation*}
in the $i^{th}$ batch of variables $\xx_N^{(i)}$ for $i = 1, 2, \dots, n$, 
\item the $n$-fold product $\symm_d \times \cdots \times \symm_d$ permutes variables within batches, and
\item the superscript $(-)^{\symm_d \times \cdots \times \symm_d}$ denotes taking invariants.
\end{itemize}

Given this cohomology presentation, it is straightforward to check that (for fixed $d, k, m, p$) either of the assignments
\begin{equation}
[n] \mapsto H^{p}(X_{d,n,k}) \quad \text{or} \quad
[n] \mapsto H^{p}(X_{d,n,dn-m})
\end{equation}
is naturally a $\coFI$-module (here we permute and set batches of variables to zero wholesale).
Both of these are submodules of the $\coFI$-module
\begin{equation}
[n] \mapsto \QQ[x_1, \dots, x_{dn}]_{p/2}
\end{equation}
(where we interpret this as the zero module if $p$ is odd).
Representation stability follows from \cite[Thm. 1.13]{CEF}.
\end{proof}

Theorem~\ref{higher-dimensional-stability} shows that the sequences of homology representations 
in question are multiplicity stable.  However, the following problem remains open.

\begin{problem}
\label{cohomology-problem}
Calculate the graded isomorphism type of the $\symm_n$-module $H_p(X_{d,n,k})$.
\end{problem}

Problem~\ref{cohomology-problem} would be best solved by a tableau formula analogous to
Theorem~\ref{graded-frobenius}. The unresolved nature of Problem~\ref{cohomology-problem} illustrates
the leitmotif from the introduction:
 it can be easier to prove that a sequence of modules is representation stable than it is
to calculate their isomorphism types.

As with the cohomology presentation of $X_{n,k}$, a key component of the presentation of $H^{\bullet}(X_{d,n,k})$
in \cite{RhoadesSpanning} involved exhibiting a nonstandard affine paving 
\begin{equation}
\varnothing = Z_0 \subset Z_1 \subset \cdots \subset Z_m = Gr(d,n,k)
\end{equation}
of $Gr(d,n,k)$ which differs from the product Schubert variety paving such that 
 $X_{d,n,k} = Gr(d,n,k) - Z_i$ for some $i$.
Theorem~\ref{higher-dimensional-stability} could have also been proven using this affine paving alone and 
the known cohomology presentation of the Grassmann product
$Gr(d,n,k) = Gr(d,k) \times \cdots \times Gr(d,k)$.

\subsection{Anticommuting variables}
Let $\theta_1, \dots, \theta_n$ be anticommuting variables, i.e. $\theta_i \theta_j = - \theta_j \theta_i$
for $1 \leq i, j \leq n$.  Write $\QQ[\theta_1, \dots, \theta_n]$ for the $2^n$-dimensional
exterior algebra on these variables and write $\QQ[\theta_1, \dots, \theta_n]_d$ for its ${n \choose d}$-dimensional
subspace of homogeneous degree $d$.
The ring $\QQ[\theta_1, \dots, \theta_n]$ is a graded $\symm_n$-module by subscript permutation.

Just as in the commuting case, the assignment $[n] \mapsto \QQ[\theta_1, \dots, \theta_n]$ is an $\FI$-module.
However, this $\FI$-module is not finitely-generated. There are two ways to see this:
(1) the $\symm_n$-module
$\QQ[\theta_1, \dots, \theta_n]$ has a copy of the sign representation
$S^{(1^n)}$ in top degree, so that we do not have multiplicity stability and
(2) the dimension of $\QQ[x_1, \dots, x_n]$ is $2^n$, and if $V$ is any finitely-generated $\FI$-module the 
dimension sequence $\dim V(n)$ has eventually polynomial growth.
However, for fixed $d$ the assignment $[n] \mapsto \QQ[\theta_1, \dots, \theta_n]_d$ is a finitely-generated
$\FI$-module generated by the single element $\theta_1 \cdots \theta_d \in \QQ[\theta_1, \dots, \theta_d]_d$.

For $n, m, p \geq 0$ let $S(n,m,p)$ be the $\QQ$-algebra formed by taking the tensor product
of $m$ copies of the polynomial ring $\QQ[x_1, \dots, x_n]$ in commuting variables
and $p$ copies of the exterior algebra $\QQ[\theta_1, \dots, \theta_n]$ in anticommuting variables:
\begin{equation*}
S(n,m,p) := \overbrace{\QQ[x_1, \dots, x_n] \otimes \cdots \otimes \QQ[x_1, \dots, x_n]}^m \otimes 
\overbrace{\QQ[\theta_1, \dots, \theta_n] \otimes \cdots \otimes \QQ[\theta_1, \dots , \theta_n]}^p.
\end{equation*}
The ring $S(n,m,p)$ has a multigrading obtained by considering each tensor factor separately. 
For $\alpha = (\alpha_1, \dots, \alpha_m) \in (\ZZ_{\geq 0})^m$
and $\beta = (\beta_1, \dots, \beta_p) \in (\ZZ_{\geq 0})^p$ let 
$S(n,m,p)_{\alpha,\beta}$ be the piece of multidegree $(\alpha,\beta)$.


The rings $S(n,1,0)$ and $S(n,0,1)$ 
are the symmetric and exterior algebras
on $n$ generators. 
The ring $S(n,1,1)$ is called {\em superspace} and can be identified with the 
ring of polynomial valued differential forms on Eulidean $n$-space.

The symmetric group $\symm_n$ acts diagonally on the tensor factors of $S(n,m,p)$.
Orellana and Zabrocki recently proved \cite[Thm. 3.1]{OZ} 
a combinatorial interpretation of the $\symm_n$-isomorphism
type of the homogeneous piece $S(n,m,p)_{\alpha,\beta}$.
We consider this object for varying $n$.

\begin{proposition}
\label{s-is-stable}
Let $m,p \geq 0$ and let $\alpha \in (\ZZ_{\geq 0})^m$ and $\beta \in (\ZZ_{\geq 0})^p$ be multidegrees.
The sequence of $\symm_n$-modules
\begin{equation*}
S(n,m,p)_{\alpha,\beta} \quad \quad n = 1, 2, 3, \dots
\end{equation*}
is uniformly representation stable with respect to the natural embeddings 
\begin{equation*}
S(n,m,p)_{\alpha,\beta} \hookrightarrow S(n+1,m,p)_{\alpha,\beta}.
\end{equation*}
\end{proposition}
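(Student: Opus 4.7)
The plan is to exhibit $[n]\mapsto S(n,m,p)_{\alpha,\beta}$ as a finitely generated $\FI$-module and then invoke Theorem~\ref{FIRepStable}.

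Write $\xx^{(i)}=(x^{(i)}_1,\ldots,x^{(i)}_n)$ for the commuting variables in the $i$-th polynomial tensor factor and $\ttheta^{(j)}=(\theta^{(j)}_1,\ldots,\theta^{(j)}_n)$ for the anticommuting variables in the $j$-th exterior tensor factor. The piece of multidegree $(\alpha,\beta)$ decomposes as
\begin{equation*}
S(n,m,p)_{\alpha,\beta} \;=\; \bigotimes_{i=1}^{m}\QQ[\xx^{(i)}]_{\alpha_i}\;\otimes\;\bigotimes_{j=1}^{p}\QQ[\ttheta^{(j)}]_{\beta_j},
\end{equation*}
carrying the diagonal $\symm_n$-action. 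Each of the $m+p$ factors is an $\FI$-module in which an injection $f:[n]\hookrightarrow[n']$ acts by $x^{(k)}_i\mapsto x^{(k)}_{f(i)}$ on the $k$-th polynomial factor and by $\theta^{(j)}_i\mapsto\theta^{(j)}_{f(i)}$ on the $j$-th exterior factor. Under the standard injection $\iota_n:[n]\hookrightarrow[n+1]$ these structure maps coincide with the natural inclusions appearing in the proposition's statement.

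Next I would check that each factor is finitely generated. The polynomial factor $[n]\mapsto\QQ[x_1,\ldots,x_n]_{\alpha_i}$ is generated by the partition monomials $x_1^{\lambda_1}\cdots x_k^{\lambda_k}$ for $\lambda\vdash\alpha_i$, as recalled in the discussion preceding Theorem~\ref{FIRepStable}. The exterior factor $[n]\mapsto\QQ[\theta_1,\ldots,\theta_n]_{\beta_j}$ is generated by the single element $\theta_1\theta_2\cdots\theta_{\beta_j}$, since an arbitrary exterior monomial $\theta_{i_1}\wedge\cdots\wedge\theta_{i_{\beta_j}}$ with $i_1<\cdots<i_{\beta_j}$ is its image under the injection $k\mapsto i_k$, as observed at the start of Section~\ref{Other}.

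Finally, I would invoke the standard fact that the pointwise tensor product of finitely generated $\FI$-modules with diagonal $\symm_n$-action is again finitely generated: if $V$ and $W$ are generated in $\FI$-degrees $\leq a$ and $\leq b$, then $V\otimes W$ is generated in degree $\leq a+b$ by elements of the form $V(f)(v)\otimes W(g)(w)$, for $v,w$ generators and $f,g$ injections into a common $[N]$ (see \cite{CEF}). Iterating across the $m+p$ tensor factors gives finite generation of $[n]\mapsto S(n,m,p)_{\alpha,\beta}$, and Theorem~\ref{FIRepStable} then yields the claimed uniform representation stability with respect to the natural inclusions. The main obstacle is this tensor-product closure step: everything else is formal bookkeeping, but passing from generators of the individual factors to an honest $\FI$-generating set of the diagonal tensor product requires care in tracking how injections distribute across tensor factors, especially when $m+p$ is large.
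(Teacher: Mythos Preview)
Your proposal is correct. Both you and the paper establish finite generation of the $\FI$-module $[n]\mapsto S(n,m,p)_{\alpha,\beta}$ and then invoke \cite{CEF}. The difference is that the paper's argument is more direct: rather than decomposing into tensor factors and appealing to closure of finitely generated $\FI$-modules under pointwise tensor product, the paper simply observes that any monomial in $S(n,m,p)_{\alpha,\beta}$ involves at most $N:=\alpha_1+\cdots+\alpha_m+\beta_1+\cdots+\beta_p$ distinct subscripts, and hence is $\symm_n$-conjugate (up to sign) to a monomial lying in $S(N,m,p)_{\alpha,\beta}$. That single sentence gives finite generation immediately, so the ``main obstacle'' you flag never arises. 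Your route is perfectly valid and has the virtue of being modular (each factor's contribution to the generation degree is visible), but the paper's direct monomial-support bound is shorter and avoids the tensor-product closure lemma altogether.
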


\begin{proof}
We have an $\FI$-module defined by $[n] \mapsto S(n,m,p)_{\alpha,\beta}$.  It is enough to show that 
this $\FI$-module is finitely generated.  
Any monomial in the $x$ and $\theta$ variables contained in 
the homogeneous component $S(n,m,p)_{\alpha,\beta}$ is $\symm_n$-conjugate (up to sign) to some monomial
in the $x$ and $\theta$ variables which lies in
$S(\alpha_1 + \cdots + \alpha_m + \beta_1 + \cdots + \beta_p, m, p)_{\alpha,\beta}$.
Since $m, p, \alpha$, and $\beta$ are fixed, there are only finitely many such conjugacy classes.
\end{proof}

Multihomogeneous quotients of $S(n,m,p)$ have received significant attention in algebraic combinatorics.
Bigraded quotients of superspace $S(n,1,1)$  generalizing the coinvariant algebra $R_n$
and specializing to the more general $\symm_n$-modules $R_{n,k}$
were studied by Rhoades and Wilson \cite{RWVandermonde}.
A trigraded quotient of $S(n,2,1)$ introduced by Zabrocki \cite{Zabrocki} conjecturally has 
Frobenius image equal to a symmetric function appearing in the Haglund-Remmel-Wilson
{\em Delta Conjecture}
in the theory of Macdonald polynomials
\cite{HRW}.  
We give this last example in more detail.

Let $S(n,p,m)^{\symm_n}_+$ be the subring of $S(n,p,m)$ 
consisting of $\symm_n$-invariant elements with vanishing constant term and
let $R(n,m,p)$ be 
the homogeneous quotient
\begin{equation}
R(n,m,p) := S(n,m,p)/\langle S(n,m,p)^{\symm_n}_+ \rangle.
\end{equation}
Write $R(n,m,p)_{\alpha,\beta}$ for the homogeneous piece of $R(n,m,p)$ of multidegree $(\alpha,\beta)$.

For any symmetric function $F$, the (primed) {\em delta operator} $\Delta_F': \Lambda \rightarrow \Lambda$
is a linear operator on the ring $\Lambda$ of symmetric functions over the ground field $\QQ(q,t)$.
For any partition $\mu$, let $\widetilde{H}_{\mu} \in \Lambda$ be the associated modified Macdonald
symmetric function.  The operator $\Delta'_F$ acts on $\widetilde{H}_{\mu}$ by
\begin{equation}
\Delta'_F: \widetilde{H}_{\mu} \mapsto F[B_{\mu}(q,t) - 1] \cdot \widetilde{H}_{\mu}.
\end{equation}
The eigenvalue $F[B_{\mu}(q,t) - 1]$ is obtained by filling every $(i,j)$ box of the Ferrers diagram of $\mu$ 
(aside from the northwest corner) with the monomial $q^i t^j$ and evaluating the symmetric function $F$
at these monomials. For example, if $\mu = (3,2)$ our filling is
\begin{small}
\begin{center}
\begin{young}
 $\cdot$ & $q$ & $q^2$ \cr
 $t$ & $qt$ 
\end{young}
\end{center}
\end{small}
so that $F[B_{\mu}(q,t) - 1] = F(q, q^2, t, qt)$.
The operator $\Delta'_F$ extends to an operator on all symmetric functions in $\Lambda$ by linearity.

The Delta Conjecture \cite{HRW} predicts the monomial expansion of $\Delta'_{e_{k-1}} e_n$ for any $k \leq n$.
It reads
\begin{equation}
\Delta'_{e_{k-1}} e_n = \Rise_{n,k}(\xx;q,t) = \Val_{n,k}(\xx;q,t)
\end{equation}
where $\Rise$ and $\Val$ are certain formal power series involving the infinite alphabet
$\xx = (x_1, x_2, \dots )$ and two additional parameters $q,t$; see \cite{HRW} for their definitions.
Zabrocki \cite{Zabrocki} verified by computer that
\begin{equation}
\label{zabrocki-conjecture}
\grFrob(R(n,2,1);q,t,z) = \sum_{k= 1}^{n} z^{n-j} \cdot \Delta'_{e_{k-1}} e_n
\end{equation}
(where $q, t$ track commuting degree and $z$ tracks anticommuting degree)
for $n \leq 6$ and conjectured that \eqref{zabrocki-conjecture} holds in general.

\begin{proposition}
\label{r-is-stable}
Let $m,p \geq 0$ and let $\alpha \in (\ZZ_{\geq 0})^m$ and $\beta \in (\ZZ_{\geq 0})^p$ be multidegrees.
The sequence of $\symm_n$-modules
\begin{equation*}
R(n,m,p)_{\alpha,\beta} \quad  \quad n = 1, 2, 3, \dots
\end{equation*}
is uniformly representation stable.
\end{proposition}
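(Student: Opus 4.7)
The plan is to imitate the co-FI argument used in the Medium Black Box proof of Theorem~\ref{stability-theorem}. The FI-structure from Proposition~\ref{s-is-stable} does not descend to $R(n,m,p)_{\alpha,\beta}$, because under the variable-inclusion map $\phi_\iota\colon S(n,m,p)\hookrightarrow S(n+1,m,p)$ a diagonal invariant such as $x_1^{(1)}+\cdots+x_n^{(1)}$ is sent to an element that is no longer $\symm_{n+1}$-invariant and, one checks, does not lie in the target ideal. The correct framework is to endow $R(n,m,p)_{\alpha,\beta}$ with a $\coFI$-structure and deduce representation stability by dualization.

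For an injection $f\colon[n]\hookrightarrow[n']$, define the algebra map $\psi_f\colon S(n',m,p)\to S(n,m,p)$ by $x_j^{(r)}\mapsto x_i^{(r)}$ and $\theta_j^{(r)}\mapsto\theta_i^{(r)}$ when $j=f(i)$, and $\psi_f(x_j^{(r)})=\psi_f(\theta_j^{(r)})=0$ otherwise. Restricting to multidegree $(\alpha,\beta)$, these assemble into a $\coFI$-module on $S(n,m,p)_{\alpha,\beta}$; in the monomial basis it is dual to the FI-module of Proposition~\ref{s-is-stable} and therefore has a finitely generated FI-dual.

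The key computation is that $\psi_f$ carries the ideal $\langle S(n',m,p)_+^{\symm_{n'}}\rangle$ into $\langle S(n,m,p)_+^{\symm_n}\rangle$. Given $\pi\in\symm_n$, extend it to $\tilde\pi\in\symm_{n'}$ by the rule $\tilde\pi(f(i))=f(\pi(i))$ and $\tilde\pi(j)=j$ for $j\notin f([n])$. A short check on the generators $x_j^{(r)}$ and $\theta_j^{(r)}$ gives $\psi_f\circ\tilde\pi=\pi\circ\psi_f$, so every $\symm_{n'}$-invariant pushes forward to a $\symm_n$-invariant; because $\psi_f$ is a ring map, the ideal containment follows, and the quotient sequence $[n]\mapsto R(n,m,p)_{\alpha,\beta}$ is a $\coFI$-quotient of the $\coFI$-module $[n]\mapsto S(n,m,p)_{\alpha,\beta}$.

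Dualizing this surjection of $\coFI$-modules yields an inclusion of FI-modules into the finitely generated FI-module of Proposition~\ref{s-is-stable}. Theorem~\ref{fi-is-noetherian} then forces the dual $R(n,m,p)_{\alpha,\beta}^\vee$ to be finitely generated, and \cite[Thm.~1.13]{CEF} converts this into representation stability; finite-dimensionality of each graded piece recovers representation stability for the original sequence. The one genuinely new input over Proposition~\ref{s-is-stable} is the invariant-preservation check via $\tilde\pi$ in the preceding paragraph: it is precisely the compatibility of the diagonal $\symm_n$-action with coordinate-zeroing, and is the reason $\coFI$ rather than $\FI$ is forced by the passage to the quotient.
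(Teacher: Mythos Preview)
Your argument is correct and follows essentially the same route as the paper: endow $S(n,m,p)_{\alpha,\beta}$ with the $\coFI$-structure via variable-zeroing, check that $\psi_f$ sends $\symm_{n'}$-invariants to $\symm_n$-invariants so that $R(n,m,p)_{\alpha,\beta}$ is a $\coFI$-quotient, then dualize and invoke the Noetherian property together with \cite[Thm.~1.13]{CEF}. The paper's proof is terser---it simply asserts $f^*(S(r,m,p)^{\symm_r}_+) \subseteq S(n,m,p)^{\symm_n}_+$ without writing out your $\tilde\pi$ verification---but the content is the same.
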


\begin{proof}
By \cite[Thm. 1.13]{CEF} it is enough to show $[n] \mapsto R(n,m,p)_{\alpha,\beta}$ is 
a finitely-generated $\coFI$-module.
By Theorem~\ref{fi-is-noetherian} 
and the proof of Proposition~\ref{s-is-stable}
we need only show that $[n] \mapsto R(n,m,p)$
is a quotient $\coFI$-module of $[n] \mapsto S(n,m,p)$; the finite generation of $R(n,m,p)_{\alpha,\beta}$
will follow by restricting to multidegree $(\alpha,\beta)$.

Let $f: [n] \rightarrow [r]$ be an injection and consider the associated map $f^*: S(r,m,p) \rightarrow S(n,m,p)$.
We have
$f^*(S(r,m,p)^{\symm_r}) \subseteq S(n,m,p)^{\symm_n}$ and therefore
$f^*(S(r,m,p)^{\symm_r}_+) \subseteq S(n,m,p)^{\symm_n}_+$  so that $f^*$ carries the generating set of the
ideal defining the quotient $R(r,m,p)$ into the generating set of the ideal defining $R(n,m,p)$.
\end{proof}

Rhoades and Wilson have an alternative conjectural model for $\Delta'_{e_{k-1}} e_n$ involving 
a superspace extension of the Vandermonde determinant \cite{RWVandermonde}.
Unlike the Zabrocki model  \eqref{zabrocki-conjecture}, its validity is known at $t = 0$.
Describing this model requires some notation.

Let $k \leq n$ and set $r := n-k$.  The {\em superspace Vandermonde} $\delta_{n,k} \in S(n,1,1)$
is
\begin{equation}
\delta_{n,k} := \varepsilon_n \cdot (x_1^{k-1} x_2^{k-1} \cdots x_r^{k-1} x_{r+1}^{k-1} x_{r+2}^{k-2} \cdots
x_{n-1}^1 x_n^0 \cdot \theta_1 \theta_2 \cdots \theta_r)
\end{equation}
where $\varepsilon_n := \sum_{w \in \symm_n} \sign(w) \cdot w \in \QQ[\symm_n]$.

The symmetric
partial derivative operator $\partial/\partial x_i$ acts naturally on $\QQ[x_1, \dots, x_n]$ for each $1 \leq i \leq n$.
For any $1 \leq i \leq n$ we have an antisymmetric 
partial derivative $\partial/\partial \theta_i$ acting on the exterior algebra
$\QQ[\theta_1, \dots, \theta_n]$ by
\begin{equation}
\partial/\partial \theta_i:  \theta_{j_1} \cdots \theta_{j_r} \mapsto \begin{cases}
(-1)^{s-1} \theta_{j_1} \cdots \theta_{j_{s-1}} \theta_{j_{s+1}} \cdots \theta_{j_r} & \text{if $i = j_s$} \\
0 & \text{if $i \notin \{j_1, \dots, j_r\}$}
\end{cases}
\end{equation}
for any $1 \leq j_1 < \cdots < j_r \leq n$ and linear extension; see \cite[Sec. 5]{RWVandermonde} for more details.
Given any commuting or anticommuting generator of $S(n,m,p)$ we have an action of the corresponding 
partial derivative on the relevant tensor factor of $S(n,m,p)$.

Let $m \geq 1$ and consider two tensor factors of $S(n,m,p)$ generated by commuting variables. 
To distinguish these tensor factors, we relabel so that these variables are $y_1, \dots, y_n$ and $z_1, \dots, z_n$.
For $j \geq 1$, the $j^{th}$ {\em polarization operator} from $y$-variables to $z$-variables is the linear
operator on $S(n,m,p)$ given by
\begin{equation}
\rho_{y \rightarrow z}^{(j)} := z_1 (\partial/\partial y_1)^j + z_2 (\partial/\partial y_2)^j + \cdots 
+ z_n (\partial/\partial y_n)^j.
\end{equation}
The operator $\rho_{y \rightarrow z}^{(j)}$ lowers $y$-degree by $j$ and raises $z$-degree by $1$.

Similarly, if $p \geq 1$ we may consider two tensor factors of $S(n,m,p)$ generated by anticommuting 
variables, which we relabel $\xi_1, \dots, \xi_n$ and $\tau_1, \dots, \tau_n$.
The {\em polarization operator} from $\xi$-variables to $\tau$-variables is the operator on $S(n,m,p)$
given by
\begin{equation}
\rho_{\xi \rightarrow \tau} := \tau_1 (\partial/\partial \xi_1) + \tau_2 (\partial/\partial \xi_2) + \cdots 
+ \tau_n (\partial/\partial \xi_n).
\end{equation}

Following \cite{RWVandermonde}, for $m, p \geq 1$ we let $V(n,m,p,k)$ be the smallest subspace of $S(n,m,p)$
which contains $\delta_{n,k}$ (in the `first' sets of commuting and anticommuting variables)
 and is closed under all partial derivative and polarization operators.
The space $V(n,m,p,k)$ is a multigraded $\symm_n$-module.

Rhoades and Wilson conjecture \cite[Conj. 6.3]{RWVandermonde} that 
\begin{equation}
\label{rw-conjecture} \text{coefficient of $z^{n-k}$ in }
\grFrob(V(n,2,1,k);q,t,z) =  \Delta'_{e_{k-1}} e_n
\end{equation}
where $q,t$ track commuting degree and $z$ tracks anticommuting degree.
The conjecture \eqref{rw-conjecture} is known at $t = 0$; by \cite[Thm. 4.2]{RWVandermonde} we have
\begin{equation}
\label{rw-theorem}
\text{coefficient of $z^{n-k}$ in }
\grFrob(V(n,1,1,k);q,z) =  \Delta'_{e_{k-1}} e_n \mid_{t = 0}.
\end{equation}
Proving
the corresponding specialization of the Zabrocki conjecture \eqref{zabrocki-conjecture}:
\begin{equation}
\grFrob(R(n,1,1);q,z) = \sum_{k = 1}^n z^{n-k} \cdot \Delta'_{e_{k-1}} e_n \mid_{t = 0}
\end{equation}
remains an open problem.

The $V$-modules enjoy the same stability property as the $R$-modules.

\begin{proposition}
\label{v-is-stable}
Let $m,p,k \geq 0$ and let $\alpha \in (\ZZ_{\geq 0})^m$ and $\beta \in (\ZZ_{\geq 0})^p$ be multidegrees.
The sequence of $\symm_n$-modules
\begin{equation*}
V(n,m,p,n-k)_{\alpha,\beta} \quad  \quad n = 1, 2, 3, \dots
\end{equation*}
is uniformly representation stable.
\end{proposition}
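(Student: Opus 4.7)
The plan is to follow the same blueprint as in Propositions~\ref{s-is-stable} and~\ref{r-is-stable}: realize $[n]\mapsto V(n,m,p,n-k)_{\alpha,\beta}$ as a sub-$\FI$-module of the finitely generated $\FI$-module $[n]\mapsto S(n,m,p)_{\alpha,\beta}$ of Proposition~\ref{s-is-stable}, and then apply the Noetherian property (Theorem~\ref{fi-is-noetherian}) together with \cite[Thm. 1.13]{CEF}.

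The substantive task is verifying that the $\FI$-transition maps on $S(\cdot,m,p)$ preserve the $V$-subspaces. Since every injection $f\colon[n]\hookrightarrow[n']$ factors as a standard inclusion followed by a permutation in $\symm_{n'}$, and since each $V(n',m,p,n'-k)$ is manifestly $\symm_{n'}$-stable (its generator $\delta_{n',n'-k}$ transforms by sign under $\symm_{n'}$, and the algebra $D_{n'}$ of partial-derivative and polarization operators is closed under $\symm_{n'}$-conjugation), it suffices to handle the standard inclusion $\iota\colon[n]\hookrightarrow[n+1]$. Writing $V(n,m,p,n-k)=D_n\cdot\delta_{n,n-k}$, and observing that every operator in $D_n$ agrees with the corresponding operator in $D_{n+1}$ on $S(n,m,p)\subseteq S(n+1,m,p)$ (each extra polarization summand such as $z_{n+1}(\partial/\partial y_{n+1})^j$ kills elements independent of the $(n{+}1)$st variables), everything reduces to showing that $\iota_*\delta_{n,n-k}\in V(n+1,m,p,n+1-k)$.

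I would prove this last containment by exhibiting the explicit identity
\begin{equation*}
\frac{\partial^{n}}{\partial x_1\,\partial x_2\cdots \partial x_n}\,\delta_{n+1,n+1-k} \;=\; (n-k)^{k+1}\,(n-k-1)!\;\delta_{n,n-k},
\end{equation*}
where $x_1,\dots,x_{n+1}$ are the commuting variables of the first batch. Writing $\delta_{n+1,n+1-k}=\varepsilon_{n+1}(m)$ with $m=x_1^{n-k}\cdots x_{k+1}^{n-k}x_{k+2}^{n-k-1}\cdots x_{n-1}^2 x_n^1 x_{n+1}^0\theta_1\cdots\theta_k$, the operator $\partial^{n}/(\partial x_1\cdots\partial x_n)$ kills every term $\sign(w)\,w(m)$ unless $w(n+1)=n+1$, since $x_{n+1}$ is the unique variable of exponent $0$ in $m$. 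Each surviving $w\in\symm_n$ then contributes the same constant $\prod_{j=1}^{n}e_j=(n-k)^{k+1}(n-k-1)!$ (where $e_j$ denotes the exponent of $x_j$ in $m$), and the remaining sum antisymmetrizes precisely the seed monomial of $\delta_{n,n-k}$. In the stable range $n>k$ this scalar is nonzero, so $\iota_*\delta_{n,n-k}\in V(n+1,m,p,n+1-k)$.

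The main obstacle is exactly this differential-operator calculation: one must produce an operator in $D_{n+1}$ that lowers the total $x$-degree by the right amount and still survives antisymmetrization to return a nonzero scalar multiple of $\delta_{n,n-k}$, thereby explaining why one can simultaneously grow $n$ and promote the parameter $n-k$ to $n+1-k$. Once this is in hand, the rest is routine: $[n]\mapsto V(n,m,p,n-k)_{\alpha,\beta}$ is a sub-$\FI$-module of the finitely generated $\FI$-module $[n]\mapsto S(n,m,p)_{\alpha,\beta}$, Theorem~\ref{fi-is-noetherian} yields finite generation, and \cite[Thm. 1.13]{CEF} delivers the desired uniform representation stability.
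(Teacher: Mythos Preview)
Your approach is essentially identical to the paper's: both reduce to showing that the standard inclusion sends $V(n,m,p,n-k)$ into $V(n+1,m,p,n+1-k)$ by exhibiting $\delta_{n,n-k}$ as a constant multiple of $(\partial/\partial x_1)\cdots(\partial/\partial x_n)\,\delta_{n+1,n+1-k}$, and then invoke Noetherianity and \cite[Thm.~1.13]{CEF}. Your constant $(n-k)^{k+1}(n-k-1)!$ agrees with the paper's $(n-k)^k(n-k)!$, and your added explanation of why only permutations fixing $n+1$ survive, and why it suffices to treat the standard inclusion, fills in details the paper leaves implicit.
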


\begin{proof}
Thanks to Theorem~\ref{fi-is-noetherian} and Proposition~\ref{s-is-stable} it is enough to show that the natural
inclusion $S(n,m,p) \hookrightarrow S(n+1,m,p)$ sends
$V(n,m,p,n-k)$ into $V(n+1,m,p,n-k+1)$.  
The identity
\begin{equation}
\label{crucial-identity}
(\partial/\partial x_1) (\partial/\partial x_2) \cdots (\partial/ \partial x_n) \delta_{n+1,n-k+1} =
(n-k)^k \cdot (n-k)! \cdot \delta_{n,n-k}
\end{equation}
 may be verified directly from the definition of the superspace Vandermonde.
 Equation~\eqref{crucial-identity} implies that $\delta_{n,n-k} \in V(n+1,m,p,n-k+1)$ which in turn implies 
 $V(n,m,p,n-k) \subseteq V(n+1,m,p,n-k+1)$ as desired.
\end{proof}

It is known that $\Rise_{n,k}(\xx;q,t)$ is symmetric in $\xx$ and Schur-positive, but its Schur expansion
is unknown.
The expression $\Delta'_{e_{k-1}} e_n$ is symmetric, but is not known to be Schur-positive.
It is unknown whether $\Val_{n,k}(\xx;q,t)$ is symmetric.
Assuming these three formal power series are all Schur-positive symmetric functions,
Proposition~\ref{r-is-stable} and \eqref{zabrocki-conjecture} motivate the following problem.

\begin{problem}
\label{delta-combinatorial-problem}
Fix $i, j, k \geq 0$.  Prove combinatorially
that the sequences of $\symm_n$-modules $V_n, U_n, W_n$ whose Frobenius images 
are given by
\begin{align}
\Frob(V_n) &= \text{coefficient of $q^i t^j$ in $\Delta'_{e_{n-k-1}} e_n$} \\
\Frob(U_n) &= \text{coefficient of $q^i t^j$ in $\Rise_{n,n-k}(\xx;q,t)$} \\
\Frob(W_n) &= \text{coefficient of $q^i t^j$ in $\Val_{n,n-k}(\xx;q,t)$} 
\end{align}
are uniformly multiplicity stable.
\end{problem}

Our closing problem, which is likely very difficult, is a final illustration of our introductory leitmotif.

\begin{problem}
\label{multidegree-isomorphism-problem}
Let $n, m, p, k \geq 0$ and let $\alpha \in (\ZZ_{\geq 0})^m$ and $\beta \in (\ZZ_{\geq 0})^p$ be 
multidegrees. Calculate the isomorphism type of either of the $\symm_n$-modules
$R(n,m,p)_{\alpha,\beta}$ or $V(n,m,p,k)_{\alpha,\beta}$.
\end{problem}

\section{Acknowledgements}
\label{Acknowledgements}

E. Ramos was supported by NSF grant DMS-1704811. 
B. Rhoades was partially supported by NSF Grant DMS-1500838.
The authors are very grateful to Steven Sam and Ben Young for many helpful conversations.

\end{document}